\def\algbackskip{\hskip-\ALG@thistlm}
\pgfplotsset{compat=newest} 
\pgfplotsset{plot coordinates/math parser=false} 
\newtheorem{remark}{Remark}
\newtheorem{assum}{Assumption}
\newcommand{\bhX}{\mathbf{\hat{X}}}
\newcommand{\bhD}{\mathbf{\hat{D}}}
\newcommand{\bhH}{\mathbf{\hat{H}}}
\newcommand{\bhv}{\mathbf{\hat{v}}}
\newcommand{\btv}{\mathbf{\tilde{v}}}
\newcommand{\btp}{\mathbf{\tilde{p}}}
\newcommand{\dbtv}{\mathbf{\dot{\tilde{v}}}}
\newcommand{\bhp}{\mathbf{\hat{p}}}
\newcommand{\bhA}{\mathbf{\hat{A}}}
\newcommand{\btA}{\mathbf{\tilde{A}}}
\newcommand{\bhB}{\mathbf{\hat{B}}}
\newcommand{\btB}{\mathbf{\tilde{B}}}
\newcommand{\bhV}{\mathbf{\hat{V}}}
\newcommand{\bhP}{\mathbf{\hat{P}}}
\newcommand{\dbhV}{\mathbf{\dot{\hat{V}}}}
\newcommand{\efac}{\mathbf L_{\bE_{11}}}
\newcommand{\efact}{\mathbf L_{\bE_{11}}^\top}
\newcommand{\efacmt}{\mathbf L_{E_{11}}^{-\top}}
\def\Aoo{\mathbf{A}_{11}}
\def\Aot{\mathbf{A}_{12}}
\def\Aott{\Aot^\top}
\def\tAoo{\tilde{\mathbf{A}}_{11}}
\def\tAot{\tilde{\mathbf{A}}_{12}}
\def \tAott{\tAot^{\top}}
\def\tAoo{\tilde{\mathbf{A}}_{11}}
\def\tEoo{\tilde{\mathbf{E}}_{11}}
\def\btH{\tilde{\mathbf{H}}}
\def\Eoo{\mathbf{E}_{11}}
\def\hBo{\mathbf{\hat{B}}_{1}}
\def\tBo{\mathbf{\tilde{B}}_{1}}
\def\fo{\mathbf{f}}
\def\ft{\mathbf{g}}
\def\vz{\mathbf{v}_\top}
\def\dvz{\dot{\mathbf{v}}_\top}
\def\hvz{\hat{\mathbf{v}}}
\def\vp{\mathbf{v}_\perp}
\def\dvp{\dot{\mathbf{v}}_\perp}
\def\Vv{\mathbf{V}_\bv}
\def\tVv{\tilde{\mathbf{V}}_\bv}
\def\DC{\texttt{driven cavity}}
\def\CW{\texttt{cylinder wake}}
\def\opinf{\texttt{OpInf}}
\def\opinflin{\texttt{OpInf\textunderscore lin}}
\def\tol{\texttt{tol}}
\def\FOM{\texttt{FOM}}
\def\ROM{\texttt{ROM}}
\def\DMD{\texttt{DMD}}
\def\DMDc{\texttt{DMDc}}
\def\DMDquad{\texttt{DMDquad}}
\def\POD{\texttt{POD}}
\def\totimes{\mathbin{\tilde\otimes}}
\newcommand{\sspan}[1]{\ensuremath{\mathop{\mathrm{span}}\left( #1 \right)}}
\newlength\figureheight
\newlength\figurewidth
\newtheorem{theorem}{Theorem}[section]
\renewcommand{\hat}[1]{\widehat{#1}}
\renewcommand{\tilde}[1]{\widetilde{#1}}
\newenvironment{eq}{\everymath {\displaystyle \everymath{ }} \equation}{ \endequation} %
\renewcommand{\hat}[1]{\widehat{#1}}
\renewcommand{\tilde}[1]{\widetilde{#1}}
\begin{document}

\title{Operator Inference and Physics-Informed Learning of Low-Dimensional
  Models for Incompressible Flows}

\author[1]{Peter~Benner}
\affil[1]{Max Planck Institute for Dynamics of Complex Technical Systems,
  Magdeburg, Germany.\newline
Faculty of Mathematics, Otto von Guericke University Magdeburg, Germany.\authorcr
  Email: \texttt{\href{mailto:benner@mpi-magdeburg.mpg.de}{benner@mpi-magdeburg.mpg.de}},
  ORCID: \texttt{\href{https://orcid.org/0000-0003-3362-4103}%
    {0000-0003-3362-4103}}}

\author[2]{Pawan~Goyal}
\affil[2]{Max Planck Institute for Dynamics of Complex Technical Systems,
  Magdeburg, Germany.\authorcr
  Email: \texttt{\href{mailto:goyalp@mpi-magdeburg.mpg.de}{goyalp@mpi-magdeburg.mpg.de}},
  ORCID: \texttt{\href{https://orcid.org/0000-0003-3072-7780}%
    {0000-0003-3072-7780}}}

\author[3]{Jan~Heiland}
\affil[3]{Max Planck Institute for Dynamics of Complex Technical Systems,
  Magdeburg, Germany. \newline
Faculty of Mathematics, Otto von Guericke University Magdeburg, Germany.
  \authorcr
  Email: \texttt{\href{mailto:heiland@mpi-magdeburg.mpg.de}{heiland@mpi-magdeburg.mpg.de}},
  ORCID: \texttt{\href{https://orcid.org/0000-0003-0228-8522}%
    {0000-0003-0228-8522}}}

\author[4,$\ast$]{Igor~Pontes~Duff}
\affil[4]{Max Planck Institute for Dynamics of Complex Technical Systems,
  Magdeburg, Germany.\authorcr
  Email: \texttt{\href{mailto:pontes@mpi-magdeburg.mpg.de}{pontes@mpi-magdeburg.mpg.de}},
  ORCID: \texttt{\href{https://orcid.org/0000-0001-6433-6142}%
    {0000-0003-3072-7780}}}
\affil[$\ast$]{corresponding author}

\shorttitle{Learning of low-dimensional models for incompressible flows} 
\shortauthor{P. Benner, P. Goyal, J. Heiland, and I. Pontes Duff}








\abstract{
Reduced-order modeling has a long tradition in computational fluid dynamics. The
ever-increasing significance of data for the synthesis of low-order models is
well reflected in the recent successes of data-driven approaches such as
\emph{Dynamic Mode  Decomposition} and \emph{Operator Inference}. 
With this work, we discuss an approach to learning structured low-order models
for incompressible flow from data that can be used for engineering studies such
as control, optimization, and simulation.  
To that end, we utilize the intrinsic structure of the Navier-Stokes equations
for incompressible flows and show that learning dynamics of the velocity and
pressure can be decoupled, thus leading to an efficient operator inference
approach for learning the underlying dynamics of incompressible flows.
Furthermore, we show the operator inference performance in learning low-order
models using two benchmark problems and compare with an intrusive method, namely
proper orthogonal decomposition, and other data-driven approaches. 
}

\keywords{
Computational fluid dynamics, scientific machine learning, incompressible flow, Navier-Stokes equations, operator inference
}

\msc{
37N10, 68T05, 76D05, 65F22, 93A15, 93C10
}

\maketitle
\section{Introduction} 

The numerical solution of the incompressible Navier-Stokes equation is a demanding task in terms of computational effort and memory requirements.  Reduced-order models can significantly decrease demands while maintaining a certain accuracy. One may well say that flow simulations have been a success story for the application of model reduction schemes as well as a driving force
for the development of new approaches. For example, the popular methods of
\emph{Proper Orthogonal Decomposition} and \emph{Dynamic Mode Decomposition}
seem to have their origins in flow computations; see \cite{morSir87a,morSch10}. On
the other hand, the distinguished nonlinear and differential-algebraic structure
of the semi-discrete incompressible Navier-Stokes has been attractive for the
testing of extensions of standard system-theoretic approaches like Balanced
Truncation (BT), LQG-BT,
moment-matching, or polynomial expansions of the HJB equations; see
\cite{morAhmBGetal17,morBenH15,BreKP19,morHeiSS08,morBenSU16}. 
Over the years, many applications and variants of POD for
incompressible flows have been reported. We only mention particular works that include finite volume schemes, the use of supremizers, and pressure reconstruction \cite{StaHMLR17,StaR18} that adaptively improve the bases for the use in optimal control \cite{Rav00}, or works that address the algebraic constraint in different spatial discretizations \cite{morGraHLU19} and refer to the numerous references therein. For an application of the \emph{Proper Generalized Decomposition} (PGD), see \cite{DumAA11}.


Albeit the success of the projection-based methods such as POD, one of the significant drawbacks of these methods is that they require a discretized full-order model. It may be unavailable or a cumbersome task to obtain it.  One may think of a scenario when a process is simulated using a black-box commercial solver. Hence, data-driven modeling has gained interest, aiming to determine reduced-order models directly using data that may be either obtained using a solver or in an experimental set-up. 
There exist vast literature on learning dynamical systems from data, and we
review the most relevant literature to this work in what follows.  Often
referred to as \emph{system identification}, the inference of models for linear
time-invariant (LTI) systems has a long history with applications in control
theory. One of the most relevant classical approaches is the eigenvalue
realization algorithm, which was first proposed in \cite{HoK66} in the minimal
realization set-up and then in \cite{JuaP85} for system identification. This
algorithm is based on the Hankel matrix using data from the system's impulse
response. For frequency domain data, several approaches were designed allowing
rational interpolation of LTI transfer functions through, e.g., vector-fitting
\cite{GusS99}, Loewner approach \cite{morMayA07,morPehGW17}, and the so-called
\emph{AAA algorithm} \cite{morNakST18}. 

On the other hand, nonlinear system identification requires an assumption on the
structure of the non-linearity. Towards this, the Loewner framework has been
extended to classes of nonlinear systems, especially bilinear and quadratic-bilinear systems in \cite{morAntGI16} and \cite{morGosA18}, respectively. Other identification methods are based on block-oriented nonlinear modeling, which defines systems as interconnections of an LTI block and static nonlinear blocks \cite{giri2010block}. An example of these block-oriented modeling is the Wiener-Hammerstein systems, and many works have been dedicated to the identification of such systems, see, e.g., \cite{billings1982identification, wills2013identification, schoukens2017identification}. Lately, sparse identification has also emerged as a powerful technique \cite{BruPK16}. The approach's principle is to pick up a few nonlinear terms from a vast library of candidate nonlinear functions representing the dynamics of the system. However, it relies on the fact that the nonlinear terms that are important to represent the dynamics should be presented in the library.
Another attractive data-driven method for nonlinear systems is based on the
interpretation of dynamical systems via the \emph{Koopman operator}, which
readily applies to nonlinear systems. The identification is done by the so-called \emph{Dynamic Mode Decomposition} (DMD).
Curiously, DMD has initially been developed for low dimensional approximations
of fluid dynamics \cite{morSch10}. Since then, it has seen various extensions
such as extended DMD \cite{WilKR15}, kernel DMD \cite{WilRK15}, higher-order DMD
\cite{LecV17}. Furthermore, it is intrinsically related to the Koopman operator
analysis, see \cite{Mez13}.  Moreover, DMD can be used to simultaneously
identify an input operator \cite{ProBK16, ProBK18}, and an output operator
\cite{AnnGS16, morBenHM18}.  Additionally in \cite{morGosP20}, the authors
propose the DMD with control incorporation of quadratic-bilinear terms. We refer
readers to \cite{morKutBBetal16} for a comprehensive monograph on this topic.   


One often comes across scenarios where prior knowledge, such as governing equations of the process, is known. In these set-ups, however, the system parameters and discretization scheme are not known, but the underlying partial differential equations that govern the dynamics are known.  Under these assumptions, the operator inference approach has emerged as a powerful tool to learn reduced-order models operators. It was first studied in \cite{morPehW16}, aiming at identifying nonlinear polynomial systems, which was later extended to general nonlinear systems \cite{QKPW2020_lift_and_learn,morBenGKPW20}. 
Moreover, recently, physics-informed neural network-based approaches have received a lot of attention, where one aims to utilize the known knowledge such as partial differential equations to build a model to solve nonlinear multi-physics problems, see, e.g., \cite{raissi2019physics,raissi2018deep,raissi2018multistep}.

In this work,  we focus on constructing models for incompressible flows. The dynamics of such a flow is, typically, given by the Navier-Stokes equations that comprise a differential equation and an algebraic equation. The algebraic equation enforces the incompressibility condition. 
%
Even though one may write down the equations, the discretization scheme and parameters such as the Reynolds number may be unknown. If the Reynolds is known, then physics-informed neural network for incompressible flow can be utilized that explicitly makes use of the known Navier-Stokes equations for incompressible flows, see \cite{jin2020nsfnets}. However, the approach requires system parameters such as Reynolds numbers to be known, which may not be available, and the proposed methodology is valid for uncontrollable flows. Moreover, the proposed neural network has high-dimensional outputs that are the velocity and pressure in a given domain. Hence, the network can be computationally expensive for large-scale flows. But we know that the dynamics of large-scale dynamical systems typically lie in a low-dimensional subspace. With that aim, we propose a scheme to identifying reduced-order models for flows using data that incorporates the knowledge of Navier-Stokes equations for incompressible flows to learn reduced-order models from data; particularly, we discuss how to employ the operator inference approach proposed in \cite{morPehW16} to incompressible flows governed by Navier-Stokes equations.

To this end, we discuss {an application of} the operator inference approach{, proposed in \cite{morPehW16}} to build low-order models for incompressible flows.
For this, we use the intrinsic structure of the semi-discrete Navier-Stokes
equations and show that the learning dynamics of the velocity and pressure can
be decoupled. As the main result, we present an efficient learning algorithm for
modeling incompressible flows via a low-dimensional ordinary differential
equation (ODE).  In practice, it is a nontrivial task %
to extract such an \emph{underlying ODE} from the model equations. As we will discuss, standard projection based approaches like
POD may require corrections of the model and the projection basis and still suffer from systematic model errors caused by numerical inaccuracies.  {On the other hand,} the tailored operator inference method {for incompressible flows} directly identifies an \emph{underlying
ODE} without being subjected to these theoretical and practical issues. Moreover, we show that in theory and under certain assumptions, the intrusive POD model converges to the learned model using the proposed non-intrusive method. 
%

The rest of the manuscript is structured as follows. In Section
\ref{sec:nse-dae-ode}, we introduce the spatially discretized incompressible
Navier-Stokes equations in the velocity-pressure formulation and recall how to
write the underlying ordinary differential equation for the velocity alone by
eliminating the coupling with the pressure. In \Cref{sec:projection-mor}, we
discuss an intrusive method (POD) to determine reduced-order models. In
\Cref{sec:data-based-mor}, we recap the operator
inference approach, proposed in \cite{morPehW16}.  In \Cref{sec:opinf-nse}, we  tailor the operator inference
approach for incompressible flows and show that intrusive and non-intrusive
models converge under certain conditions. In Section \ref{sec:ftwo-not-zero}, we
discuss the case of inhomogeneity in the incompressibility constraints and its
implications for the model reduction schemes. Finally, we present illustrative numerical studies in Section \ref{sec:num-results}, and in the subsequent section, we conclude the paper with summarizing remarks and an outlook on future related research.


\section{Navier-Stokes  and its Equivalent Transformation as ODE}\label{sec:nse-dae-ode}
In this paper, we focus on the Navier-Stokes equations given in the partial differential form as follows:
\begin{equation}\label{eq:pDE_NS}
	\begin{aligned}
		\dfrac{\partial \mathfrak{v}}{\partial t} &= - \mathfrak{v}\nabla\mathfrak{v} + \dfrac{1}{\texttt{Re}} \Delta\mathfrak{v} - \nabla \mathfrak{p} + f \quad &  &\text{on}~\Omega, \\
		\texttt{div}~\mathfrak{v} &= g & &\text{on}~\Omega, \\
		\dfrac{\partial \mathfrak{v}}{\partial t} & =0 &&\text{on}~\partial \Omega, \\
		\mathfrak{v} &= \mathfrak{v}_0 && \text{for}~t=0,
	\end{aligned}
\end{equation}
where $\mathfrak v$ and $\mathfrak{p}$  are the velocity and pressure, respectively, and $\Omega$ and $\partial \Omega$ denote the domain and boundary of the domain, respectively; \texttt{Re} denotes the Reynolds number.  We begin by presenting the differential algebraic equations (DAE), arising from the semi-discretization of the Navier-Stokes equation \eqref{eq:pDE_NS}; that is,
\begin{subequations}\label{eq:DisctNavSto}
	\begin{align}
		\Eoo\dot{\bv}(t) &= \Aoo\bv(t) + \Aot\bp(t) + \bH\left(\bv \otimes \bv\right)  +  \mathbf{f}(t), \label{eq:DiffEq} \\ 
		0 &=  \Aott\bv(t) + \bg(t), \quad \bv(0) = \bv_0, \label{eq:AlgEq}  
	\end{align}
\end{subequations}
where $\bv(t) \in \R^{n_\bv}$, $\bp(t) \in \R^{n_\bp}$ are, respectively, the velocity and the pressure with $n_\bv>n_\bp$,  and  $\Eoo, \Aoo \in \R^{n_\bv \times n_\bv}$, $\Aot \in \R^{n_\bv \times n_\bp}$, $\mathbf{f}(t) \in \R^{n_\bv \times 1}$, $\bg (t)\in \R^{n_\bp\times 1}$; the initial condition for the velocity is denoted by $\bv_0$. Furthermore, we assume that $\bE_{11}$ and $\Aott\Eoo^{-1}\Aot$ are nonsingular matrices; hence, the system \eqref{eq:DisctNavSto} is an index-2 linear system when $\bH \equiv 0$. Additionally, we will suppose that the forcing terms are giving by
\begin{equation*}
	\mathbf{f}(t) = \bB_1 \bu(t) \quad \text{and} \quad \bg(t) \equiv 0,
\end{equation*} 
where $\bu(t)\in \R^m$ is an input vector and $\bB_1 \in \R^{n_\bv \times m}$; we show how to handle the case when $\bg(t) \neq 0$ in \Cref{sec:ftwo-not-zero}.

Next, we aim at transforming the DAE system \eqref{eq:DisctNavSto} into an equivalent ODE system, which will be crucial in the analysis done in the rest of the paper. 

\subsection*{Transformation of Navier-Stokes DAEs} Taking the time derivative of the algebraic conditions  $\Aott \bv(t)= \bg(t) = 0$ $\forall t\geq 0$, implies that $\Aott \dot \bv(t)=0$ $\forall t \geq 0$. Then, we  premultiply \eqref{eq:DiffEq} by $\Aott \Eoo^{-1}$ from the left-hand side, yielding:
\begin{equation*}
	0 = \Aott \Eoo^{-1}\Aoo \bv(t) + \Aott \Eoo^{-1}\Aot \bp(t) + \Aott \Eoo^{-1}\bH
	(\bv(t) \otimes \bv(t)) + \Aott \Eoo^{-1}\fo (t).
\end{equation*}
With $\bS:=\Aott \Eoo^{-1} \Aot$ being invertible, the pressure $\bp$ can be
expressed in terms of $\bv$ and $\fo$ as follows:
\begin{equation}\label{eq:def-p}
	\bp(t) = -\bS^{-1}\Aott \Eoo^{-1}(\Aoo \bv(t) + \bH(\bv(t)\otimes \bv(t)) + \fo(t)).
\end{equation}
Substituting $\bp(t)$ in \eqref{eq:DiffEq} from the above equation, we obtain an ODE for the velocity as follows:
\begin{equation}\label{eq:nse-ode}
	\Eoo \dot \bv(t) = \Pi^\top\Aoo \bv(t) + \Pi^\top\bH(\bv(t)\otimes \bv(t)) +
	\Pi^\top\fo(t),
\end{equation}
where
\begin{equation}\label{eq:def-disc-leray}
	\Pi^\top = \bI - \Aot \bS \Aott \Eoo^{-1}.
\end{equation}
This leads to the important observation that although both the velocity and pressure are evolved and are coupled through the DAE \eqref{eq:DisctNavSto}, the DAE can be decoupled into an algebraic and differential part. Precisely, the evolution of the velocity over time can be given by the quadratic ODE \eqref{eq:nse-ode}, and there exists an algebraic equation that links the pressure and velocity given by \eqref{eq:def-p}. 

\begin{remark}
	Equation \eqref{eq:def-p} is often referred to as \emph{Pressure Poisson
		Equation}, and the projector defined in \eqref{eq:def-disc-leray} is called
	the \emph{discrete Leray Projector}.
\end{remark}

\section{Projection-based Reduced-order Modeling for Navier-Stokes Equations}\label{sec:projection-mor}
In this section, we discuss the construction of reduced-order models as surrogates for the  large-scale DAEs in \eqref{eq:DisctNavSto}.  To preserve the DAE structure, we consider block Galerkin projections. To that end, we aim at finding projection matrices $\bV_\bv\in \R^{n_\bv \times r_\bv}$ and $\bV_\bp\in \R^{n_\bp \times r_\bp}$ such that
\[\bv(t) \approx \bV_\bv \btv(t) \quad \text{and} \quad \bp \approx \bV_\bp \btp(t). \] 
This leads to a reduced-order model that preserves the DAE structure:
\begin{subequations}\label{eq:ROMNavSto}
	\begin{align}
	\tEoo \dbtv(t) &=  \tAoo\btv(t) + \tAot\btp(t) + \btH\left(\btv \otimes \btv\right)  + \btB_1 \bu(t)  , \label{eq:Red_DiffEq} \\ 
	0 &=  \tAott \btv(t), \label{eq:Red_AlgEq}  
	\end{align}
\end{subequations}
where
\begin{align}\label{eq:pod_red_matrices}
\tEoo &= \bV_\bv^\top\bE_{11}\bV_\bv, & \tAoo &= \bV^\top_\bv\bA_{11}\bV_\bv, \quad \tAot = \bV_\bv^\top\bA_{12}\bV_\bp,\\  
\btH &= \bV_\bv^\top\bH\left(\bV_\bv \otimes \bV_\bv\right), &   \text{and}  \quad \btB_1 &=\bV_\bv^\top\bB_1.
\end{align}
The above reduced-order system can be  obtained in a Galerkin projection framework using a holistic projection matrix as follows:
\[\bV = \begin{bmatrix}
\bV_\bv & \\
& \bV_\bp
\end{bmatrix}. \]

\subsection*{POD projection bases}
There exist several approaches to determine the projection matrices, see, e.g., \cite{morAhmBGetal17,morBenH15,BreKP19,morHeiSS08}. Here, we consider the case when the bases are determined using POD.  In its basic form,
an $r_\bv$-dimensional $\bV_\bv$  projection matrix is determined using the $r_\bv$ principal singular vectors of the snapshot matrix:
\begin{equation}
	\mathbb V = 
	\begin{bmatrix}
		\bv(t_0), \bv(t_1), \ldots,\bv(t_N)
	\end{bmatrix}.
\end{equation}
One of many equivalent characterizations is the choice that minimizes the index
\begin{equation}\label{eqn:podbas-index}
	\|\mathbb V - \Pi_{r_\bv} \mathbb V\|_F
\end{equation}
over all projections $\Pi_{r_\bv}$ of rank $k$ with $\bV_\bv\bV_\bv^\top$ being the
minimizing projection.

In the context of the system \eqref{eq:DisctNavSto}, arising from  the FEM or FVM discretization of the Navier-Stokes equations, the matrix $\bE_{11}$ is a symmetric positive definite \emph{mass matrix} that induces the \emph{discrete $L_2$ norm} via
\begin{equation}
	\|\bv(t)\|_{\bE_{11}} := \sqrt{\bv(t)^\top\bE_{11}\bv(t)}
\end{equation}
and the corresponding index \eqref{eqn:podbas-index} as
\begin{equation}\label{eqn:podbas-index-wrtm}
	\|\efact(\mathbb V - \Pi_k \mathbb V)\|_F,
\end{equation}
where $\efac$ is a factor of $\Eoo$, i.e., $\efac\efact=\Eoo$.
Hence, the $r_\bv$-dimensional POD basis that minimizes the index
\eqref{eqn:podbas-index-wrtm} and respects the mass
matrix, is given as 
\begin{equation}
	\bV_\bv := \efact \tilde \bV_{r_\bv},
\end{equation}
where $\tilde \bV_{r_\bv}$ contains  the $r_\bv$ dominant left singular vectors of the matrix
\begin{equation*}
	\efacmt \mathbb V;
\end{equation*}
cp. \cite[Lem. 2.5]{morBauBH18}.

\begin{remark}
	In the true divergence-free case, i.e., if in the algebraic constraint
	\eqref{eq:AlgEq} $\bg \equiv 0$, then the solution snapshots and 
	the POD modes fulfill the constraint so that 
	\[ \Aott \mathbb V = 0 \quad \textnormal{and} \quad \Aott \Vv=0\] and, thus, the reduced
	coefficients read $\tAot=0$. Accordingly, the projected
	equations decouple and reduce to \eqref{eq:Red_DiffEq}. As a consequence, the reduced-order model obtained by POD has the following ordinary differential equation structure
	\begin{eq}
		\tEoo \dbtv(t) = \tAoo\btv(t)  + \btH\left(\btv \otimes \btv\right)  +   \tBo\bu(t), \label{eq:red_pod_divfree} 
	\end{eq}
	where the algebraic conditions are automatically satisfied by the POD basis $\bV_\bv$.  
\end{remark}

\begin{remark}\label{rem:POD_algcond}
	Even in the case when the snapshots are divergence-free, it may happen that the POD modes will not satisfy the constraints uniformly well as standard computational approaches for an SVD focus on the decomposition aspect. For a uniform accuracy of the decomposition, the accuracy of a singular vector can be inversely proportional to the associated singular value. 
	For ODE systems, this inaccuracy is not an issue since the dominant modes are computed with high precision. For DAE systems, however, a failure in complying with the constraints introduces a systematic error. In the numerical examples later in the paper, we illustrate how the POD modes deviate from the actual subspace and how this limits the accuracy of POD for incompressible Navier-Stokes equations.
  A remedy for this could be the use of highly-accurate SVD computations
  \cite{DrmV07} as it is available in LAPACK. 
\end{remark}

\section{Operator Inference Approach}\label{sec:data-based-mor}
In the previous section, we discussed the projection-based model reduction based on
POD. The approach requires the knowledge of the (semi-discretized in space)
system matrices in \eqref{eq:DisctNavSto}, the so-called system realization. The
reduced basis is obtained by compressing the time-domain snapshots of the
system. Hence, the reduced-order model is typically constructed in the framework
of Galerkin projection. Although this approach is effective in many set-ups, the assumption of having the full order model realization might lead to some practical limitations. In various instances, the dynamical system  or a partial differential equation solver works as a black-box software, and the user has no access to the system realization. Hence, the authors in \cite{morPehW16} have proposed an operator inference framework that aims at constructing reduced-order models directly using data without having access to the realization explicitly. 
In the following, we first give a brief overview of the operator inference approach\cite{morPehW16} for ODEs.  Let us consider a system of the form
\begin{equation}\label{eq:quad_model}
	\dot \bx(t) = \bA\bx(t) +  \bH\left(\bx(t)\otimes \bx(t)\right) +  \bB\bu(t), \quad \bx(0) = \bx_0,
\end{equation}
where $\bx(t) \in \Rn$, $\bu(t) \in \Rm$, and all the other system matrices are of appropriate size.  We are interested in constructing a reduced-order model of the form:
\begin{equation}\label{eq:optinf_rom}
	\dot{\hat\bx}(t) = \hat\bA\hat\bx(t) +  \hat\bH\left(\hat\bx(t)\otimes \hat\bx(t)\right) +  \hat\bB\bu(t), \quad \hat\bx(0) = \hat\bx_0,
\end{equation}
where $\hat\bx(t) \in \Rr$ such that $\bx(t) \approx \bV \hat\bx(t)$, where $\bV$ is a projection matrix, and all the other system matrices are of appropriate size.  We aim at constructing the reduced-order model \eqref{eq:optinf_rom} using the snapshots of $\bx(t)$  at time steps $t_0, t_1,\ldots,t_N$ and input snapshots at the same time steps, i.e., $\bu(t_0),\ldots, \bu(t_N)$. Now, we first collect these snapshots in matrices as follows:
\begin{equation}
	\bX = \begin{bmatrix} \bx_0,\ldots, \bx_N \end{bmatrix},\quad 	\bU = \begin{bmatrix} \bu_0,\ldots, \bu_N \end{bmatrix},
\end{equation}
where $\bx_i := \bx(t_i)$ and $\bu_i := \bu(t_i)$.  Next, we determine dominant POD bases using the SVD of the matrix $\bX$ and construct the projection matrix, denoted by $\bV \in \Rnr$ using the $r$ dominant basis vectors. This allows us to  compute the  reduced state trajectory $\hat\bX = \bV^\top \bX$, where 
$$\hat\bX := \begin{bmatrix} \hat\bx(t_0), \ldots, \hat\bx(t_N)\end{bmatrix}  \quad\text{with}\quad \hat\bx(t_i)= \bV^\top \bx_i.$$
Furthermore, we denote the derivative of $\hat \bx(t)$ at the time steps
$t_1\ldots,t_N$ by $\dot{\hat\bx}(t_0),\ldots, \dot{\hat\bx}(t_N)$, which can be
approximated using the reduced trajectory $\hat \bx(t)$, see, e.g.,
\cite{nguyen2008best,MarH13}. Next, we collect the time-derivative approximation of $\hat\bx(t)$ in a matrix as:
\begin{equation}
	\dot{\hat\bX} = \begin{bmatrix} 		\dot{\hat\bx}(t_0), \ldots,\dot{\hat\bx}(t_N)	\end{bmatrix}.
\end{equation}
We would like to point out that there exist several commercial software that
make us available $\dot \bx(t)$ as well. In such a case,  an alternative way to
compute the time-derivative of $\hat \bx(t)$ is by projecting $\dot \bx(t)$
using the projection matrix $\bV$, i.e., $\dot{\hat\bx}(t) = \bV^\top \dot
\bx(t)$. However, in this paper, we consider the case where we do not have
access to $\dot \bx(t)$ and aim at estimating it using the data of $\hat\bx(t)$.

Having the projected data, we solve the following least-squares optimization problem to determine the operators of the reduced-order model \eqref{eq:optinf_rom}:
\begin{equation}\label{eq:opinf_optimization}
	\min_{\hat\bA,\hat\bH,\hat\bB} \left\|\dot{\hat\bX} - \begin{bmatrix}\hat \bA, \hat\bH, \hat\bB \end{bmatrix} \cD \right\|,
\end{equation}
where $\cD = \begin{bmatrix} \hat\bX \\ \hat\bX\totimes \hat\bX \\ \bU \end{bmatrix}$,  and the product $\totimes$ is defined as
\begin{equation}
	G\totimes G = \begin{bmatrix}		g_1\otimes g_1,\ldots, g_N\otimes g_N 	\end{bmatrix}
\end{equation}
with $g_i$ being $i$-th column of the matrix $G\in \R^{n\times N}$. In many cases, we may find the matrix $\cD$ to be ill-conditioned. There exist various possible ways to circumvent this issue, see, e.g., \cite{yildiz2020data,mcquarrie2020data}.  In this paper, we use an SVD based approach; a similar approach is used in the context of DMD as well, e.g., in \cite{bai2020dynamic}. For this, we consider first the SVD of the matrix $\cD$, denoted as follows:
\begin{equation}
	\cD = \cU \Sigma\cV^\top
\end{equation}
Next, we neglect the singular values of the matrix $\cD$ smaller than a given tolerance; hence, we can write
\begin{equation}
	\cD \approx \tilde\cU \tilde\Sigma \tilde\cV^\top,
\end{equation}
where $\tilde\cU \in\R^{n+n^2 + m\times r}$, $\tilde\Sigma \in \Rrr$ and $\tilde\cV \in \R^{N\times r}$ and the number of singular values, larger than the tolerance, is $r$. As a result, we determine the matrices $\hat\bA,\hat\bH,\hat\bB$ as follows:
\begin{equation}
	\begin{bmatrix}
		\hat\bA,\hat\bH,\hat\bB 
	\end{bmatrix} =  \dot{\hat\bX} \tilde\cV\tilde\Sigma^{-1}\tilde\cU^\top.
\end{equation}
We summarize all the necessary steps to obtain reduced-order models using the operator inference approach in \Cref{algo:opinf} with a slight modification, which is a regularization step.  

\begin{algorithm}[tb!]
	\caption{Operator inference approach to learn low-dimensional models using data \cite{morPehW16}.}
	\hspace{\algorithmicindent} \textbf{Input:} Snapshots $\bx(t)$ and $\bu(t)$ at time steps $t_0, t_1,\ldots,t_N$, \texttt{tol}.
	\begin{algorithmic}[1]
		\State Determine projection matrix $\bV \in \Rnr$, containing the dominant POD modes (or dominant singular vectors)  of the matrix $\bX = \begin{bmatrix} \bx(t_0),\ldots, \bx(t_N) \end{bmatrix}$.
		\State Compute the reduced trajectories $\hat\bx(t)$ at the same time steps:
		$$ \hat\bX := \begin{bmatrix}
  \hat\bx(t_0),\ldots, \hat\bx(t_N)
		\end{bmatrix},$$
		where $\hat\bx(t_i) = \bV^\top\bx(t_i), i \in \{0,\ldots,N\}$. 
		\State Approximate the time-derivative of $\hat\bx(t)$ using the reduced trajectory and collect them in a matrix as follows:
		\begin{equation*}
			\dot{\hat\bX} := \begin{bmatrix}\dot{\hat\bx}(t_0), \ldots, \dot{\hat\bx}(t_N) \end{bmatrix}.  
		\end{equation*}
		\State Compute the SVD of the matrix
		\begin{equation*}
			\cD = \begin{bmatrix} \hat\bX \\ \hat\bX\totimes \hat\bX  \\ \bU \end{bmatrix} =: \cU\Sigma\cV^T,
		\end{equation*}
	where $\Sigma$ contains the singular values of the matrix $\cD$ in the descending order. 
		\State Consider the singular values of the matrix $\cD$ larger than $\tol$. Let the number of singular values larger than $\tol$ be $\tr$.
		\State Compute the operators of the reduced-order model \eqref{eq:optinf_rom} as follows:
		\begin{equation*}
			\begin{bmatrix}\hat\bA,\hat\bH,\hat\bB \end{bmatrix} = \dot{\hat\bX}\tilde\cV^T\tilde\Sigma^{-1}\tilde\cU^T,
		\end{equation*}
		where $\tilde\cU$ and $\tilde\cV$ are the first $\tr$ columns of $\cU$ and $\cV$, respectively, and  $\tilde\Sigma$ is the principal $\tr\times \tr$ block of the matrix $\Sigma$. 
	\end{algorithmic}
	\hspace*{\algorithmicindent} \textbf{Output:} Operators:
	$\hat\bA,\hat\bH,\hat\bB$ of a reduced-order model, having the structure as
	\eqref{eq:optinf_rom}. 
	\label{algo:opinf}
\end{algorithm}


\begin{remark}\label{rem:l-curve} 
	The tolerance, as an input to \Cref{algo:opinf},  is a hyper-parameter. A larger tolerance yields a larger 
	mismatch of the data fidelity term, whereas, for a smaller tolerance, the problem will become ill-conditioned. Therefore, one can employ the idea of an $L$-curve, discussed in \cite{Han00} in the context of computed tomography. The $L$-curve analysis allows us to determine good tolerance that has a good compromise between the matching of data-fidelity term and making the problem well-conditioned. 
\end{remark}

\section{Operator Inference  for Navier-Stokes Equations}\label{sec:opinf-nse}
This section tailors the operator inference framework presented in the previous section to flow problems. We assume that the flow is incompressible, and the Navier-Stokes equations govern the underlying dynamics \eqref{eq:DisctNavSto}. 
%
%
With this assumption, consider the snapshots of the velocity and pressure vectors at the time steps $t_0, t_1, \dots, t_N$ and associate snapshot matrices as follows:
\begin{equation}\label{eq:vel_press_snapshots}
	\bV = \begin{bmatrix}
		\bv(t_0), \bv(t_1), \dots, \bv(t_N)
	\end{bmatrix}\quad \text{and} \quad \bP = \begin{bmatrix}
		\bp(t_0), \bp(t_1), \dots, \bp(t_N)
	\end{bmatrix} 
\end{equation}
with the corresponding input matrix 
\[\bU = \begin{bmatrix}
	\bu(t_0), \bu(t_1), \dots, \bu(t_N)
\end{bmatrix}.\]
Our goal is to construct reduced-order models that can predict the evolution of the velocity and pressure over time. We focus on constructing such models using only the velocity and pressures snapshot matrices $\bV$ and $\bP$ defined in \eqref{eq:vel_press_snapshots} without having access to the discretized system \eqref{eq:DisctNavSto}  and its matrix realization.   With this aim, let $\bV_\bv$ and $\bV_\bp$ be the dominant POD basis vectors of the velocity and pressure snapshot matrices $\bV$ and $\bP$, respectively. As a consequence, we can define the projected reduced velocity and pressure trajectories as follows:
\begin{align}
	\bhV &= \begin{bmatrix}
		\bhv(t_0) & \bhv(t_1), \dots,\bhv(t_N)
	\end{bmatrix}  = \bV_\bv^\top\bV,~~ \text{and} \label{eq:red_vel}\\
	\bhP &= \begin{bmatrix}
		\bhp(t_0) & \bhp(t_1),\dots, \bhp(t_N)
	\end{bmatrix} = \bV_\bp^\top\bP.
\end{align}
Moreover, let $\dot{\hat \bv}(t_i)$ denote the derivative of the reduced velocity $\hat \bv(t)$ at time $t_i$. It can be approximated using the reduced velocity trajectory $\hat\bv$ as given in \eqref{eq:red_vel}. We collect the derivative information of $\bv(t)$ in a matrix:
\begin{equation}
	\dbhV = \begin{bmatrix}
		\dot{\hat \bv}(t_0),\ldots, \dot{\hat \bv}(t_N)
	\end{bmatrix}.
\end{equation}

Since the original system, arising from the Navier-Stokes discretization, has a quadratic DAE structure as \eqref{eq:DisctNavSto}, one could blindly consider  a least-squares problem that yields a reduced DAE model \eqref{eq:ROMNavSto}, having  the same structure as in \eqref{eq:DisctNavSto}. The matrices involved in defining \eqref{eq:ROMNavSto} are the solution of the following optimization problem:
\begin{equation*}
	\min\limits_{\bhA_{11}, \bhA_{12}, \bhH , \bhB_1} \left\| \begin{bmatrix}
		\bhA_{11} & \bhA_{12} & \bhH & \bhB_1  \\
		\bhA_{12}^\top & 0 & 0 & 0 
	\end{bmatrix}\bhX -\bhD  \right\|^2_{F},
\end{equation*} 
where \[ \bhX = \begin{bmatrix}
	\bhV \\
	\bhP \\
	\bhV {\totimes} \bhV \\
	\bU
\end{bmatrix} \quad \text{and} \quad \bhD = \begin{bmatrix}
	\dbhV\\ 
	0
\end{bmatrix}.  \]
Note that so far, we have not used any additional information on the
Navier-Stokes equations. One of the important observations we make is that the
dynamics of the velocity in the Navier-Stokes equations is given independently of the
pressure. Precisely, there is an underlying ODE for the velocity that completely describes
its dynamics, see \eqref{eq:nse-ode}. This can also be seen when a reduced-order
model is constructed using POD as described in \Cref{sec:projection-mor} as long
as the choice of the POD basis is divergence-free, i.e.,  $\Aott \bV_\bv = 0$. Moreover, the pressure and velocity are connected via an algebraic equation as shown in \eqref{eq:def-p}. Hence, it is conceivable to find an ODE for the reduced velocity $\hat\bv(t)$   as follows:
\begin{equation}\label{eq:opinf_red_vel}
	\dot{\hat\bv}(t) = \hat\bA_\bv\bv(t) + \hat\bH_\bv\left(\bv(t)\otimes \bv(t)\right) + \hat\bB_\bv\bu(t),
\end{equation}
and determine the algebraic equation relating pressure and velocity in a reduced space of the form
\begin{equation}\label{eq:opinf_red_pressure}
	\hat\bp(t) = \hat\bA_\bp\bv(t) + \hat\bH_\bp\left(\bv(t)\otimes \bv(t)\right) + \hat\bB_\bp\bu(t).
\end{equation}

To that end, we can determine the involved operator to define \eqref{eq:opinf_red_vel} by the classical operator inference described in Section \ref{sec:data-based-mor}. For this, we aim at solving the following optimization problem:
\begin{equation}\label{eq:opinf_optimization_vel}
	\min_{\hat\bA_\bv,\hat\bH_\bv,\hat\bB_\bv} \left\|\dot{\hat\bV} - \begin{bmatrix}\hat \bA_\bv, \hat\bH_\bv, \hat\bB_\bv \end{bmatrix} \cD \right\|_F,
\end{equation}
where $\cD = \begin{bmatrix}  \hat\bV \\ \hat\bV\totimes\hat\bV \\ \bU \end{bmatrix}$. Similarly, we can solve the following optimization problem to find the operators of \eqref{eq:opinf_optimization_pre}:
\begin{equation}\label{eq:opinf_optimization_pre}
	\min_{\hat\bA_\bp,\hat\bH_\bp,\hat\bB_\bp} \left\|{\hat\bP} - \begin{bmatrix}\hat \bA_\bp, \hat\bH_\bp, \hat\bB_\bp \end{bmatrix} \cD \right\|_F.
\end{equation}
As discussed in \Cref{sec:data-based-mor}, the optimization problems \eqref{eq:opinf_optimization_vel} and \eqref{eq:opinf_optimization_pre} can be ill-conditioned; thus, we employ the similar remedy based on the SVD as discussed therein. 

Next, we analyze the non-intrusive reduced model \eqref{eq:opinf_red_vel} with
respect to the one obtained using an intrusive model. For this, we extend the
result from \cite[Sec 3.2]{morPehW16} and show that the non-intrusive model \eqref{eq:opinf_red_vel} converges to the intrusive one. For this, we  first make the following four assumptions:


\begin{assum}\label{assum:scheme_conv} The time-stepping scheme for the discretized model  \eqref{eq:DisctNavSto} is convergent in the $L_2$ norm for $\bv$, i.e.,
	\[ \max_{i \in \{1, \dots, T/\Delta t\} }\| \bv_i - \bv(t_i)\|_2 \rightarrow 0 \quad \text{as} \quad \Delta t\rightarrow 0.  \]	 
\end{assum}	

\begin{assum}\label{assum:deriv_conv} The derivative approximation  from the projected state converges to $\frac{d}{dt}\bhv(t_N)$ as $\Delta t \rightarrow 0$.
\end{assum}	

\begin{assum}\label{assum:fulrank_data} The matrix data $\cD = \begin{bmatrix}  \hat\bV \\ \hat\bV\totimes\hat\bV \\ \bU \end{bmatrix}$ has full column rank.
\end{assum}	

\begin{assum}\label{assum:alg_con}  $\bV_\bv \in \R^{n \times r}$ is a divergence-free projected basis, i.e., $\Aott \bV_\bv = 0$.
\end{assum}	

\begin{theorem}\label{thm:POD_opinf} Let Assumptions \ref{assum:scheme_conv}, \ref{assum:deriv_conv}, \ref{assum:fulrank_data}, and \ref{assum:alg_con} hold. Let $\tEoo$, $\tAoo$, $\btH$  and $\tBo$ be the reduced order ODE obtained by Galerkin projection as in \eqref{eq:red_pod_divfree} using the basis $\bV_\bv$.  Then, for every $\varepsilon>0$, there exist an $r<n_\bv$ and a $\Delta t>0$, such that
	\begin{equation}
		\|\tEoo^{-1}\tAoo - \bhA\|_F < \varepsilon, \quad  \|\tEoo^{-1}\btH - \bhH\|_F < \varepsilon, \quad \text{and}, \quad      \|\tEoo^{-1}\tBo - \hBo\|_F < \varepsilon.
	\end{equation}
\end{theorem}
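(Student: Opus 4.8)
The plan is to characterize both operator sets as solutions of the same linear least-squares problem and then to show that, as the reduced dimension $r$ grows and the step size $\Delta t$ shrinks, the data fed into the operator inference regression \eqref{eq:opinf_optimization_vel} converges to data that is reproduced exactly by the intrusive operators $\tEoo^{-1}\tAoo$, $\tEoo^{-1}\btH$, and $\tEoo^{-1}\tBo$. The decisive structural ingredient is Assumption~\ref{assum:alg_con}: since $\Aott\bV_\bv=0$, we have $\bV_\bv^\top\Aot=0$ and therefore $\bV_\bv^\top\Pi^\top=\bV_\bv^\top$, so that the $\bV_\bv^\top$-projection of the Leray-reduced velocity ODE \eqref{eq:nse-ode} collapses to the Galerkin form \eqref{eq:red_pod_divfree}. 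This identifies the intrusive operators as the exact regression coefficients for the continuous, untruncated projected trajectory.

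First I would make this precise at the level of the continuous trajectory. Writing $\bv(t)=\bV_\bv\hat\bv(t)+\vp(t)$ with $\vp$ the POD truncation remainder, I substitute into $\bV_\bv^\top\Eoo\dot\bv=\bV_\bv^\top\Aoo\bv+\bV_\bv^\top\bH(\bv\otimes\bv)+\bV_\bv^\top\Bo\bu$ (the $\bV_\bv^\top$-projection of \eqref{eq:nse-ode}, using $\bV_\bv^\top\Pi^\top=\bV_\bv^\top$) and collect the terms linear and quadratic in $\vp$ and $\dot{\vp}$ into a residual $\bm r(t)$. This yields
\begin{equation*}
\dot{\hat\bv}(t)=\tEoo^{-1}\tAoo\,\hat\bv(t)+\tEoo^{-1}\btH\bigl(\hat\bv(t)\otimes\hat\bv(t)\bigr)+\tEoo^{-1}\tBo\,\bu(t)+\tEoo^{-1}\bm r(t),
\end{equation*}
where, by optimality of the $\Eoo$-weighted POD basis, $\bm r(t)\to 0$ uniformly on $[0,T]$ as $r$ grows. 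Evaluating at the time instances and stacking columns, the projected continuous data therefore satisfy $\dbhV=[\tEoo^{-1}\tAoo,\tEoo^{-1}\btH,\tEoo^{-1}\tBo]\,\cD+\mathcal E$, where $\|\mathcal E\|_F$ is governed by the POD remainder together with the time-discretization error (Assumption~\ref{assum:scheme_conv}) and the derivative-approximation error (Assumption~\ref{assum:deriv_conv}), the latter two vanishing as $\Delta t\to 0$.

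Next I would invoke Assumption~\ref{assum:fulrank_data}: full rank of $\cD$ makes the minimizer of \eqref{eq:opinf_optimization_vel} unique and equal to $[\bhA,\bhH,\hBo]=\dbhV\,\cD^{\dagger}$ with $\cD^{\dagger}=\cD^{\top}(\cD\cD^{\top})^{-1}$ and $\cD\cD^{\dagger}=\bI$. Substituting the expression for $\dbhV$ gives
\begin{equation*}
[\bhA,\bhH,\hBo]-[\tEoo^{-1}\tAoo,\tEoo^{-1}\btH,\tEoo^{-1}\tBo]=\mathcal E\,\cD^{\dagger},
\end{equation*}
so that each block on the left is bounded in Frobenius norm by $\|\mathcal E\|_F\,\|\cD^{\dagger}\|_2$. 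To conclude, I fix $r$ large enough that the POD-remainder contribution to $\|\mathcal E\|_F$ falls below $\varepsilon/(2\|\cD^{\dagger}\|_2)$, and then, with $r$ held fixed, choose $\Delta t$ small enough that the discretization and derivative errors push the remaining part of $\|\mathcal E\|_F\,\|\cD^{\dagger}\|_2$ below $\varepsilon/2$; this delivers all three estimates at once.

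The main obstacle is the coupled control of $\|\mathcal E\|_F$ and the conditioning factor $\|\cD^{\dagger}\|_2=1/\sigma_{\min}(\cD)$, which cannot be driven to their limits independently: enlarging $r$ shrinks the POD part of $\mathcal E$ but enlarges $\cD$ and may degrade $\sigma_{\min}(\cD)$, while the residual $\bm r$ depends on $r$ and does not vanish as $\Delta t\to 0$. The argument therefore hinges on the order of the limits --- choosing $r$ first so that the $r$-dependent residual is controlled, and only then letting $\Delta t\to 0$ with $r$ fixed, so that the discrete data matrix converges to its full-rank continuous counterpart (Assumption~\ref{assum:fulrank_data}) and $\|\cD^{\dagger}\|_2$ remains bounded. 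Verifying that the truncation remainder $\vp$ and its time derivative $\dot{\vp}$ can be made uniformly small on the entire snapshot set, in the $\Eoo$-weighted norm matching the mass matrix, is the technical heart of the estimate.
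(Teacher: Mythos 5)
Your overall strategy is sound and rests on the same pillars as the paper's proof --- Assumption \ref{assum:alg_con} to collapse the Galerkin projection to the ODE \eqref{eq:red_pod_divfree}, the identification of the intrusive operators with a least-squares solution, Assumption \ref{assum:fulrank_data} for uniqueness, and the double limit $r\to n_\bv$, $\Delta t\to 0$ --- but your comparison object is genuinely different. The paper introduces the trajectory of the intrusive reduced model itself, i.e., snapshots $\tilde\bV$ of \eqref{eq:red_pod_divfree} and the associated data matrix $\tilde\cD$: for that data the operators $\tEoo^{-1}\tAoo$, $\tEoo^{-1}\btH$, $\tEoo^{-1}\tBo$ solve the regression \emph{exactly}, so no residual ever has to be estimated, and the entire burden moves to showing that the OpInf data $(\cD,\dbhV)$ is a vanishing perturbation of $(\tilde\cD,\dot{\tilde\bV})$, after which the convergence of least-squares solutions is quoted from \cite{morBenGKPW20}. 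You instead keep the actual OpInf data, show that the intrusive operators are near-minimizers with residual $\mathcal E$, and obtain the explicit bound $\|\mathcal E\|_F\,\|\cD^{\dagger}\|_2$ from uniqueness. Your route is self-contained and quantitative and avoids any stability argument relating the FOM trajectory to the ROM trajectory; the paper's route buys exactness of the comparison problem at the price of having to control a trajectory perturbation $\|\delta\bV\|_F\to 0$ instead.

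The step you should not let stand as written is the claim that $\bm{r}(t)\to 0$ uniformly on $[0,T]$ ``by optimality of the $\Eoo$-weighted POD basis.'' POD optimality controls the remainder $\vp$ only at the snapshot instants and only in an aggregate Frobenius sense; it gives no control whatsoever over $\dvp$, which enters $\bm{r}$ through $\bV_\bv^\top\Eoo\dvp$, and a remainder can vanish at every snapshot while its time derivative there stays $O(1)$. Consequently the $\dvp$ contribution to $\mathcal E$ is not a pure ``$r$-part'' that your $\varepsilon/2$-splitting can dispose of by choosing $r$ first: it is a mixed term. It can be tamed, but only by the combined limit --- for instance, take $r$ equal to the rank of the snapshot matrix so that $\vp(t_i)=0$ exactly at every snapshot, write $\dvp(t_i)$ as a finite difference of the (vanishing) snapshot remainders plus an $O(\Delta t)$ consistency error, and only then let $\Delta t\to 0$. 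You correctly flag this as the technical heart, but the mechanism that closes it is exact snapshot reproduction at full rank combined with $\Delta t\to 0$, not POD optimality; the paper's proof never meets this term because its comparison problem has zero residual by construction, the analogous difficulty being absorbed into its (equally informal) claim that $\|\delta\bV\|_F\to 0$ as $r\to n_\bv$ and $\Delta t\to 0$.
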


\begin{proof} 
	The proof follows the same lines as the one for  \cite[Theorem 1]{morPehW16}.
  From Assumption \ref{assum:alg_con}, we know that the projected reduced model is of the quadratic ODE form \eqref{eq:red_pod_divfree}, or equivalently, 
	\begin{eq}
		\dbtv(t) = \tEoo^{-1}\tAoo\btv(t)  + \tEoo^{-1}\btH\left(\btv \otimes \btv\right)  +   \tEoo^{-1}\tBo \bu(t). \label{eq:red_pod_divfree_Einv} 
	\end{eq} 
	Let us denote $\tilde\cD = \begin{bmatrix}  \tilde\bV \\ \tilde\bV\totimes\hat\bV \\ \bU \end{bmatrix}$, where $\tilde\bV = \begin{bmatrix}
		\btv_1, \btv_2, \dots, \btv_N
	\end{bmatrix}$ denotes the data matrix assembling $N$ snapshots of the projected reduced model \eqref{eq:red_pod_divfree_Einv} and \[\dot{\tilde\bV} = \begin{bmatrix}
		\frac{d}{dt}\btv(t_1), \frac{d}{dt}\btv(t_2), \dots, \frac{d}{dt}\btv(t_N)
	\end{bmatrix}\] 
	is its right-hand side matrix. Hence, the reduced operators $\tEoo^{-1}\tAoo$, $\tEoo^{-1}\btH$ and $\tEoo^{-1}\tBo $ represent one solution of the least-squares problem \eqref{eq:opinf_optimization_vel} for the data matrix $\tilde\cD$ and right-hand side matrix $\dot{\tilde\bV}$. Moreover, they represent the unique solution if the matrix $\tilde\cD$ has full rank. 
	
	Next, due to Assumption \ref{assum:scheme_conv}, notice that the projected velocities $\bhV$ are perturbations of the reduced velocities $\tilde\bV$, i.e., $\bhV = \tilde\bV + \delta \bV$, and, for $ \Delta t\rightarrow 0$ and the dimension $r$ increases, we have $\|\delta \bV\|_F \rightarrow 0$, because the reduced model \eqref{eq:red_pod_divfree_Einv} is equivalent to the full-order ODE \eqref{eq:nse-ode} in the POD basis of order $n_\bv$. Also, Assumption \ref{assum:deriv_conv} together with the limiting case $r = n_\bv$ leads to $\frac{d}{dt}\bhv(t_k) \rightarrow \frac{d}{dt}\btv(t_k)   $ as $r\rightarrow n_{\bv}$, for $k =1, \dots, N$. As a consequence, the data matrix $\cD$ and the approximated derivative information $\dbhV$ both can be interpreted as perturbation of $\tilde\cD$ and $\dot{\tilde\bV}$ respectively, i.e., $\cD = \tilde\cD + \delta \cD$ and $\dbhV =\dot{\tilde\bV} + \delta \dot{\bV}$, with $\| \delta \cD\|_F \rightarrow 0$ and $\|\delta \dot{\bV}\|_F \rightarrow 0 $  whenever $\Delta t \rightarrow 0$ and $r \rightarrow n_{\bv}$. Hence, as presented in \cite{morBenGKPW20}, this leads to the following asymptotic result for the least-squares problem:
	\[  \min_{\hat\bA_\bv,\hat\bH_\bv,\hat\bB_\bv}\left[ \lim\limits_{\substack{\Delta t \rightarrow 0 \\ r \rightarrow n_\bv}} \left\|\dot{\hat\bV} - \begin{bmatrix}\hat \bA_\bv, \hat\bH_\bv, \hat\bB_\bv \end{bmatrix} \cD \right\|_F\right] = 
	\min_{\hat\bA_\bv,\hat\bH_\bv,\hat\bB_\bv} \left\|\dot{\tilde\bV} - \begin{bmatrix}\hat \bA_\bv, \hat\bH_\bv, \hat\bB_\bv \end{bmatrix} \tilde\cD \right\|_F.\]
	As a consequence, the pre-asymptotic case combined with Assumption \ref{assum:fulrank_data} leads to the proof of the theorem.
\end{proof}

Theorem \ref{thm:POD_opinf} shows that the identified operators  converge to the POD projected operators as $r \rightarrow n_\bv$.  One particular limitation of this theorem is Assumption \ref{assum:alg_con}. Indeed, as mentioned in Remark \ref{rem:POD_algcond}, the SVD of the velocity snapshot matrix might generate POD modes that will not satisfy the algebraic constraints. This would result in intrusive velocity reduced-order models that are not fully independent of the pressure. On the other hand, the operator inferred model will still be a good approximation of the ODE velocity model \eqref{eq:nse-ode}.  Another limitation of this theorem is that the least-squares problem in \eqref{eq:opinf_optimization_vel} can be ill-conditioned. The numerical remedy for that is the regularizer procedure from Remark \ref{rem:l-curve}.  Although, in this case, Theorem \ref{thm:POD_opinf} does not hold, it will be shown in Section \ref{sec:num-results} that the operator inference can still identify reduced-order models with very accurate state errors.

\begin{remark}\label{rem:opinf_lin} 
	If our goal is to learn a linear system as folllows:
	\begin{equation}\label{eq:opinflin_red_vel}
		\dot{\hat\bv}(t) = \hat\bA_\bv\bv(t) + \hat\bB_\bv\bu(t),
	\end{equation} 
	that captures the dynamics of the nonlinear Navier-Stokes equations as close as possible, in the above described operator inference approach, we set $\hat\bH \equiv 0$. Such a linear model can be learned by solving 
	%
	%
	the above least-squares problem	
	\begin{equation}\label{eq:opinflin_optimization_vel}
		\min_{\hat\bA_\bv,\hat\bB_\bv} \left\|\dot{\hat\bV} - \begin{bmatrix}\hat \bA_\bv, \hat\bB_\bv \end{bmatrix} \cD \right\|_F, \quad \text{where $\cD = \begin{bmatrix}  \hat\bV  \\ \bU \end{bmatrix}$.}
	\end{equation}
\end{remark}

\section{Inhomogeneities in the Continuity Equation}\label{sec:ftwo-not-zero}
In case $\Aott \bv(t)-\bg(t)=0$ and $\bg \not\equiv 0$,  we decompose the state $\bv$ as $\bv(t) = \vz(t) + \vp(t)$, where $\vz(t) \in \ker \Aott$ that can be
defined as a solution to an ODE, and  $\vp(t)$ is an element of the complement of $\ker \Aott$.   This is typically done in the literature to compute projection-based reduced-order models in such cases, see, e.g., \cite{morHeiSS08,morAhmBGetal17}.  An advantage of this decomposition is that we can employ model reduction schemes developed for the homogeneous case, i.e., $\Aott \bv(t) = 0$. 
With the discrete
\emph{Leray} projection as in \eqref{eq:def-disc-leray}, one has
\begin{equation}\label{eqn:leray_decomp}
	\bv(t) = \Pi \bv(t) + (\bI-\Pi)\bv(t) =: \vz(t) + \vp(t)
\end{equation}
with the remainder part given as 
\begin{equation}\label{eq:eqn-vp}
	\vp(t) = - \Eoo^{-1}\Aot (\Aott \Eoo ^{-1}\Aot)^{-1}\ft(t).
\end{equation}
Assume that $\ft(t) = \bB_\perp \bu_\perp(t)$ with $\bu_\perp(t) \in \R$; thus, we can write $\vp(t) = \bS_\perp \bu_\perp(t)$.  In this case, the pressure $\bp$ is defined via
\begin{equation*}
	\bp(t) = -\bS^{-1}(\Aott \Eoo^{-1}\Aoo \bv(t) + \Aott \Eoo^{-1}\bH(\bv(t)\otimes \bv(t)) +
	\Aott \Eoo^{-1}\fo(t)
	- \bB_\perp \dot \bu_\perp(t) )
\end{equation*}
and the ODE \eqref{eq:nse-ode} can be formulated for $\vz$ via
\begin{equation}\label{eq:FOM-corrected}
	\begin{aligned}
		\Eoo \dvz(t) &= \Aoo \vz(t)+ \bH (\vz(t) \otimes \vz(t)) + \bN\vz(t)\bu_\perp(t) \\
		&\quad +  \bH \left(\bS_\perp \otimes \bS_\perp \right)\bu_\perp^2(t) +  \Aot \bp(t) + {\fo}(t),
	\end{aligned}
\end{equation} 
where $\bN = \bH \left( \bI \otimes \bS_\perp + \bS_\perp \otimes \bI \right) $. 

\begin{remark}
	Because $\Pi \dvp(t)\equiv 0$, one can show that $\dvp$ does not occur in
	\eqref{eq:FOM-corrected}; cp. \cite[Sec. 6]{morHeiSS08}.
\end{remark}

For a projection based reduction as described in Section
\ref{sec:projection-mor}, the following considerations are relevant:

\begin{itemize}
	\item If $\ft\equiv 0$ and the basis $\bV$ is divergence-free, i.e. $\Aott
	\bV=0$, then $\btA_{12} =0$ and the projected equation \eqref{eq:Red_DiffEq}
	recovers the corresponding projection of \eqref{eq:nse-ode}.
	
	\item If $\ft \neq 0$ and, thus, the corresponding basis $\bV$ is not
	divergence-free when they are computed using the original velocity snapshots, i.e., 
	\begin{equation}\label{eq:snap_matrix_non_divfree}
		\bV = \begin{bmatrix}
			\bv(t_0), \bv(t_1), \dots, \bv(t_N)
		\end{bmatrix},
	\end{equation}
	we first need to compute a divergence-free
	basis on the base of $\Pi \bV$ and $\vp(t)$ and apply the projection procedure to the corrected full order
	model \eqref{eq:FOM-corrected}.
	
\end{itemize}

Note that the resulting ODE equation has some additional terms if compared \eqref{eq:Red_DiffEq}; these are a bilinear term and square term of the input. But for a special case $\bu_\perp(t) \equiv \texttt{const}$, the bilinear term can be merged into the linear term, and the square input term becomes a constant. 
Furthermore, we mention that if the numerical realization of $\Pi$ is impossible or too costly, one may apply
\emph{empirically divergence-free} bases obtained by a projection defined by snapshots of the discrete pressure gradient
\begin{equation}
	\mathbb P := 
	\begin{bmatrix}
		\Aot\bp(t_0), \Aot\bp(t_1), \hdots, \Aot\bp(t_N)
	\end{bmatrix}.
\end{equation}
Let $\bQ_N$ be an orthogonal basis of $\sspan{ \mathbb P}$. Then, for the projection 
$\tilde \Pi:=\bI-\bQ_N \bQ_N^\top$, it holds that $\tilde \Pi \mathbb P = 0$. Consequently,
for a basis $\tVv=\tilde \Pi \tVv$, we have $\tVv^\top \Aot \bp(t_k)=0$ at all
snapshot locations $t_k$. Accordingly, for a Galerkin projection of
\eqref{eq:FOM-corrected} by $\tVv=\tilde \Pi \tVv$, one may well assume that
$\tAot=0$, since this condition holds at all data points.

When the exact projection $\Pi$ is known and $\bu_\perp  \equiv \texttt{const}$, the
approximation of the full state is defined as
\begin{equation}
	\bv(t) \approx \tVv^\top\hvz(t)+\vp(t) := \tVv^\top\hvz(t)+(\bI-\tilde \Pi)\bv(t_0) 
	\approx \tVv^\top\hvz(t)+\bQ_N\bQ_N^\top\bv(t_0),
\end{equation}
where the reduced state $\hvz$ solves the projected (and corrected) equations
\eqref{eq:FOM-corrected} with $\tAot$ set to zero; cp. also \eqref{eq:red_pod_divfree}.

The preceding considerations are only relevant for projection-based methods from Section~\ref{sec:projection-mor}.  These modifications are not necessary when a model is learned using data. It is based on the fact that the dynamics of velocity can be governed by an ODE regardless of whether the velocity satisfies divergence-free constraint or not. To elaborate more on it, we consider the algebraic equations as follows: 
\begin{equation}
	\Aott \bv(t) + \bB_\perp\bu_\perp(t) = 0,
\end{equation} 
implying 
\begin{equation}
	\Aott \dot\bv(t) + \bB_\perp\dot\bu_\perp(t) = 0.  
\end{equation} 
Next, we substitute the equation for $\dot\bv(t)$ from \eqref{eq:DisctNavSto}, we get 
\begin{equation}
	0 = \Aott \Eoo^{-1}\Aoo \bv(t) + \Aott \Eoo^{-1}\Aot \bp(t) + \Aott \Eoo^{-1}\bH
	(\bv(t) \otimes \bv(t)) + \Aott (\Eoo^{-1}\fo (t) + \bB_2\dot\bu_\perp(t)).
\end{equation}
With $\bS:=\Aott \Eoo^{-1} \Aot$ being invertible, the pressure $\bp$ can be
expressed in terms of $\bv$ and $\fo$ as follows:
\begin{equation}\label{eq:def-p_gen}
	\bp(t) = -\bS^{-1}\Aott \Eoo^{-1}(\Aoo \bv(t) + \bH(\bv(t)\otimes \bv(t)) + \fo(t) ) - \bS^{-1}\Aott\bB_2\dot\bu_\perp(t).
\end{equation}
Substituting $\bp(t)$ in \eqref{eq:DiffEq} from the above equation, we obtain an ODE for the velocity as follows:
\begin{equation}\label{eq:nse-ode_gen}
	\Eoo \dot \bv(t) = \Pi^\top\Aoo \bv(t) + \Pi^\top\bH(\bv(t)\otimes \bv(t)) +
	\Pi^\top\fo(t) - \Aot\bS^{-1}\Aott\bB_\perp \dot\bu_\perp(t) . 
\end{equation}
This illustrates that there exists an underlying ODE for the velocity, even in the inhomogeneous case. Hence, we can determine the underlying ODE by employing the proposed operator inference approach, discussed in \Cref{sec:opinf-nse}  with a slight modification. This is to include learning the operator for the derivative of the input $\bu_\perp$. 
This is one of the significant advantages of the non-intrusive method over the intrusive POD method.  

\begin{remark}
	One may argue that intrusive approaches can be applied to \eqref{eq:nse-ode_gen}, but this is highly undesirable since it requires the explicit computation of matrices such as $\Pi^\top \bA_{11}$ to determine projected reduced matrices.  Moreover, some matrices, e.g.,  $\Pi^\top \bA_{11}$  may lose sparsity, thus making intrusive model reduction computationally very expensive.
\end{remark} 
%


\section{Numerical experiments}\label{sec:num-results}
In this section, we illustrate the efficiency of the proposed method using numerical experiments and compare with the intrusive POD method and the data-driven method DMD. Precisely,  the following methodologies are compared:
\begin{itemize}
	\item  \opinf:  the proposed operator inference from Section \ref{sec:opinf-nse},
	\item \opinflin: its linear variant described in Remark \ref{rem:opinf_lin}, 
	\item \POD: intrusive model reduction based on POD, see Section \ref{sec:projection-mor}, 
	\item \DMD: classical DMD approach, see \cite[Algorithm 1]{TuRLBK14},  
	\item \DMDc: DMD with control, see \cite[Algorithm 3.4]{ProBK16}), and
	\item \DMDquad: DMD with quadratic term and control, see \cite[Section
    3.2]{morGosP20}.
\end{itemize}
Moreover, below, we provide some technical information that applies to the construction of the surrogate models and the integration of the  reduced systems: 
\begin{itemize}
	\item  \POD, \opinf~and \opinflin~surrogate models are simulated using the Python routine \texttt{odeint} from \texttt{scipy.integrate}.
	\item To employ the operator inference approach, we require the time-derivative approximation of $\bhv(t)$. This is obtained using the reduced trajectory $\hat\bv(t)$ by employing a fourth-order Runge-Kutta scheme. 
	\item The  DMD procedures are applied to the projected velocity trajectories \eqref{eq:red_vel} and then lifted using the POD basis. 
	\item We assume that the \POD~surrogate models are pure ODE models as
    \eqref{eq:red_pod_divfree}, i.e., $\btA_{12} = 0$ in
    \eqref{eq:pod_red_matrices}. For that we apply the necessary corrections as
    indicated by the theory laid out in Section \ref{sec:ftwo-not-zero}.
    However, since this is not in the scope of the standard POD approach, we
    will not take particular measures against the error introduced by
    inaccuracies in the SVD (cp. Remark \ref{rem:POD_algcond}).
\end{itemize}

The code and the raw data is made available as mentioned in Figure
\ref{fig:linkcodndat}.

\subsection{A low-order demo example}\label{subsec:demoexample}
In order to illustrate the proposed approach, we create a low-order random demo example of the form \eqref{eq:DisctNavSto}. We have generated it using the seed $0$ with $n_\bv = 4$ and $n_\bp = 1$. We construct the data for velocity and pressure using an input $u(t) = \sin(2t)e^{-0.05t}$ with zero initial condition. Next, we learn surrogate models using \opinf, \DMD, \DMDc, and \DMDquad. We set the tolerance $\tol = 10^{-4}$ for the SVD (step 5 of Algorithm \ref{algo:opinf}). In \Cref{fig:random_comparison_ori_opInf}, we compare the time-domain simulations of the original and inferred model using \opinf. Moreover, we plot the mean-absolute error of the velocity of the original and various inferred models in \Cref{fig:random_error}. We observe that \opinf~learns the best model among the considered approaches that capture the dynamics of the given data. DMD approaches that aim to learn a linear model fail to capture the dynamics; however, DMD with quadratic observers improves the model as one can expect. 

\begin{figure}[!tb]
	\centering
	\setlength\figureheight{6.0cm}
	\setlength\figurewidth{5cm}
	\input{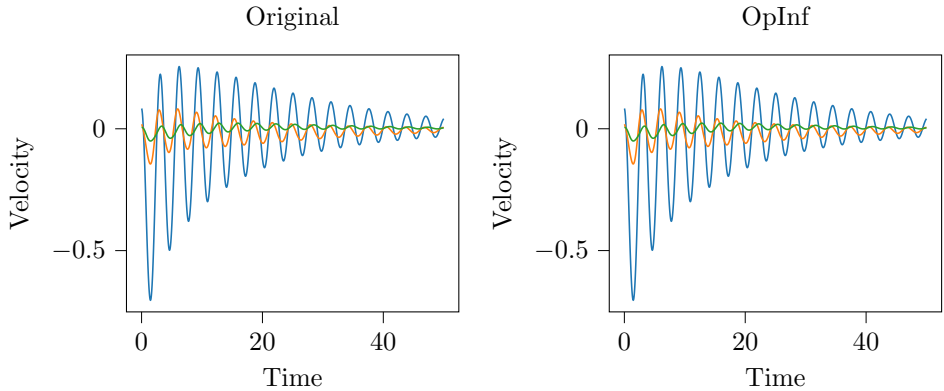}
	\caption{A comparison of the velocity of the original model and inferred model using \opinf.}
	\label{fig:random_comparison_ori_opInf}
\end{figure}

\begin{figure}[!tb]
	\centering
	\setlength\figureheight{6.0cm}
	\setlength\figurewidth{10cm}
	\includegraphics[width =\figurewidth, height =\figureheight]{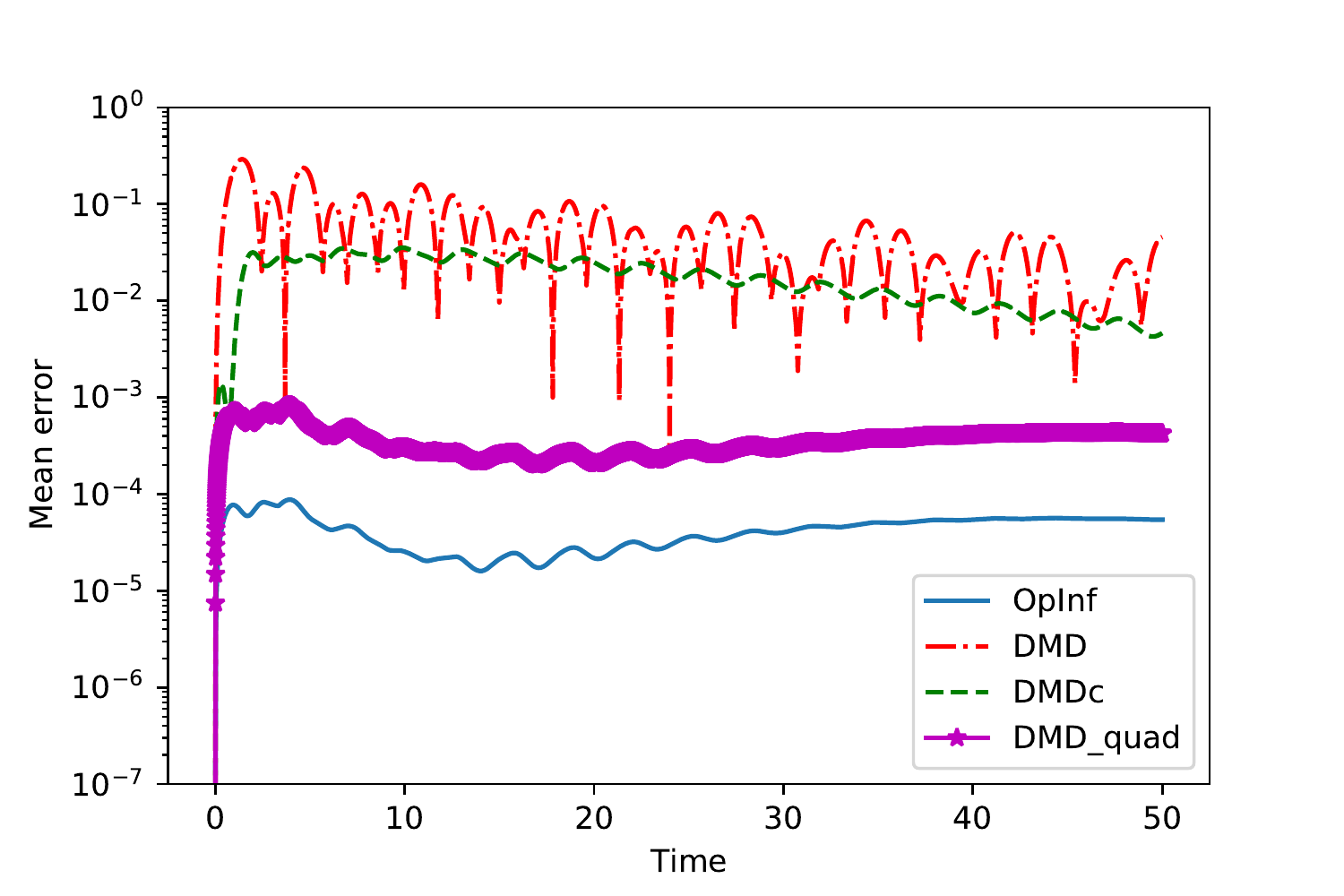}
	\caption{A comparison of the velocity obtained from the original model and inferred models using various approaches.}
	\label{fig:random_error}
\end{figure}

\subsection{Navier-Stokes examples}
We consider two test cases modeled by incompressible Navier-Stokes equations; namely,  the \emph{lid driven cavity} and  \emph{cylinder wake}, to
illustrate the performance and highlight certain properties of the non-intrusive
 schemes.  Both examples consider two-dimensional flows. For the spatial discretization, we use \emph{Taylor-Hood} elements with piece-wise quadratic ansatz  functions for the
velocities and piece-wise linear ansatz functions for the pressure on a 
triangulation; see Figure \ref{fig:num-setups} for an illustration of the setup. 

As described, e.g.,  in \cite{BehBH17}, a spatial discretization of the problems leads
to a system of the form (cp. \eqref{eq:DisctNavSto})
\begin{subequations}\label{eq:numex-semidisc-nse}
	\begin{align}
		\Eoo\dot{\bv}(t) &= \Aoo\bv(t) + \Aot\bp(t) + \bH\left(\bv(t) \otimes \bv(t)\right)  +  \mathbf{f}(t), \\ 
		0 &=  \Aott\bv(t) + \bg(t),\\
		y_{\bv}(t) &= \bC_{\bv} \bv(t), \\
		y_{\bp}(t) &= \bC_{\bp}\bp(t),
	\end{align}
\end{subequations}
where $y_{\bv}$ and $y_{\bp}$  denotes  characteristic outputs that is derived from the state $\bv$ and $\bp$
via a linear map, respectively; cp. \cite[Ch. 8]{BehBH17}.

\begin{figure}[tb]
	\centering
	\setlength\figureheight{5cm}
	\setlength\figurewidth{5cm}
%
%
%
%
\begin{tikzpicture}

\begin{axis}[
xmin=0, xmax=1,
ymin=0, ymax=1,
width=\figureheight,
height=\figurewidth,
axis on top,
xtick={0,.5,1},
ytick={0,.5,1},
tick label style={font=\footnotesize},
xlabel={$x_0$},
ylabel={$x_1$},
xlabel near ticks,
ylabel near ticks
]
\addplot graphics [includegraphics cmd=\pgfimage,xmin=0, xmax=1, ymin=0, ymax=1] {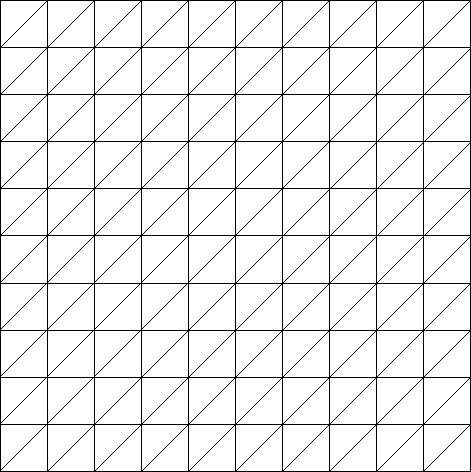};

\path [draw=black, fill opacity=0] (axis cs:13,1)--(axis cs:13,1);

\path [draw=black, fill opacity=0] (axis cs:0.05,13)--(axis cs:0.05,13);

\path [draw=black, fill opacity=0] (axis cs:13,1.38777878078145e-17)--(axis cs:13,1.38777878078145e-17);

\path [draw=black, fill opacity=0] (axis cs:0,13)--(axis cs:0,13);

\node at (axis cs:0.53,0.84) [color=black, fill=white, anchor=south west] {$\Gamma_0$};

\end{axis}

\end{tikzpicture}
	
	\setlength\figureheight{4cm}
	\setlength\figurewidth{15cm}
%
%
%
%
\begin{tikzpicture}

\begin{axis}[
xmin=0, xmax=2.2,
ymin=0, ymax=.41,
width=\figurewidth,
height=\figureheight,
axis on top,
xtick={0,2.2},
ytick={0,.41},
tick label style={font=\footnotesize},
xlabel={$x_0$},
ylabel={$x_1$},
xlabel near ticks,
ylabel near ticks
]
\addplot graphics [includegraphics cmd=\pgfimage,xmin=0, xmax=2.2, ymin=0, ymax=0.41] {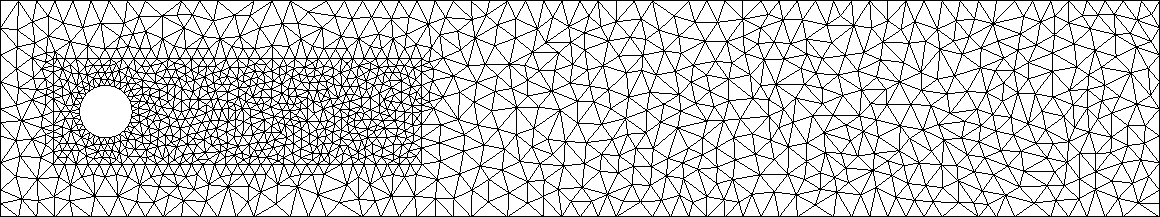};

\path [draw=black, fill opacity=0] (axis cs:13,1)--(axis cs:13,1);

\path [draw=black, fill opacity=0] (axis cs:0.05,13)--(axis cs:0.05,13);

\path [draw=black, fill opacity=0] (axis cs:13,1.38777878078145e-17)--(axis cs:13,1.38777878078145e-17);

\path [draw=black, fill opacity=0] (axis cs:0,13)--(axis cs:0,13);

\node at (axis cs:0.,0.25) [color=black, fill=white, anchor=south west] {$\Gamma_0$};
\node at (axis cs:2.2,0.21) [color=black, fill=white, anchor=south east] {$\Gamma_1$};

\end{axis}

\end{tikzpicture}
	\caption{An illustration of the computational domains of the \emph{driven cavity
			flow} (above) and and \emph{flow around a cylinder} (below), and example discretizations of the respective domains.} 
	\label{fig:num-setups}
\end{figure}

The \DC~example describes an internal flow in a cavity that is driven by a
moving lid (depicted by $\Gamma_0$ in Figure \ref{fig:num-setups}) modeled by
a Dirichlet boundary condition for the tangential component of the velocity. The
resulting semi-discrete system is as in \eqref{eq:numex-semidisc-nse} with
$\ft(t)\equiv 0$. On the other hand, the \CW~example models flow through a channel with a round obstacle in
between. The flow is induced by a strongly imposed parabolic flow profile at
the inlet (depicted by $\Gamma_0$ in Figure \ref{fig:num-setups}). The
corresponding semi-discrete model is in the form of \eqref{eq:numex-semidisc-nse} with
$\ft(t)\equiv \mathop{const}$.

\subsubsection{Numerical setup and computation of the snapshots}

As for the discretization of the time dimension, we apply an
\emph{implicit-explicit} Euler scheme -- i.e., an explicit treatment of the nonlinear term and implicit treatment of the linear term and, in particular, the
constraint equation in \eqref{eq:numex-semidisc-nse} -- on an equidistant grid.
For the initial value, we use the solution of the corresponding steady-state
\emph{Stokes problem}. 

To probe the accuracy of the discretization and, thus, the reliability of the
snapshots and the resulting reduced-order models, we compute the resulting
discrete approximation of the velocities $v$
\begin{equation}
	\int_{0}^{T} \|v(t)\|_{L^2(\Omega)} dt,
\end{equation}
where $T$ is the end time (see Table \ref{tab:num-setups}) and $\Omega$ is the domain of the problems,
and compare it to the corresponding approximation based on a finer spatial grid
and a high-accuracy time integration scheme with step-size control applied to
the ODE formulation \eqref{eq:nse-ode}. As indicated by the numbers presented in
Table \ref{tab:conv-tests}, the error in the spatial discretization is
dominating, so that the linear convergence of the time integration scheme only
occurs on the finest triangulation.

\begin{table}
	\centering
	\begin{tabular}{l|c|c}
		Parameter & \DC & \CW \\
		\hline
		Reynolds number & 500 & 60 \\
    state dimensions $(n_\bv, n_\bp)$ & (3042,441) & (9356,1289) \\
		time interval & [0, 6] & [0, 2] \\
		number of timesteps & 512 & 512 \\
		number of snapshots & 513 & 513 \\
		order of discretized model \eqref{eq:numex-semidisc-nse} & 3042 &  5812
	\end{tabular}
	\caption{The parameters of the numerical setups.}\label{tab:num-setups}
\end{table}

\begin{table}
	\centering
	\caption*{\DC}
	\begin{tabular}{l|ccc}
		$n_\bv$/ number of timesteps & $256$ & $512$ & $1024$ \\
		\hline
		$722$ & $-0.0988$ & $-0.0992$ & $-0.0993$ \\
		$3042$ & $-0.0292$ & $-0.0299$ & $-0.0303$ \\
		$6962$ & $0.0015$ & $0.0007$ & $0.0003$
	\end{tabular}
	\caption*{\CW}
	\begin{tabular}{l|ccc}
		$n_\bv$/ number of timesteps & $512$ & $1024$ & $2048$ \\
		\hline
		$5812$  & $-0.0022$ & $-0.0025$ & $-0.0026$ \\
		$9356$  & $-0.0002$ & $-0.0005$ & $-0.0006$ \\
		$19468$ & $0.0005$ & $0.0002$ & $0.0001$
	\end{tabular}
	\caption{The difference in the numerical approximation of the norm of the
		computed velocity to the approximation on the finest spatial discretization with a
		high-accuracy time integration scheme.}
	\label{tab:conv-tests}
\end{table}


\begin{remark} For the large-scale Navier-Stokes examples,  \DMDquad~produced only unstable models; therefore, the results will not be reported.
\end{remark}


\subsubsection{Example 1: driven cavity}
Here, we report the numerical experiments conducted for the \DC~example. We consider the full-order model (\FOM) semi-discretized in space  of order $n_\bv = 3042$.  To infer models, $513$ velocity snapshots are generated in the interval $[0,6]$ equally spaced in time using the \emph{implicit-explicit} Euler scheme. It is again worth noticing that, for this example, $\ft(t)\equiv 0$. As a consequence, the velocity snapshot matrix $\bV$ satisfies the algebraic constraints, i.e., $\Aott \bV = 0$.  Then, the POD basis $\bV_\bv$ was constructed using the SVD of the snapshot matrix $\bV$. Figure \ref{fig:dc_decay_sv} depicts the decay of the singular values of the snapshot matrix and shows how much the algebraic conditions are satisfied if the POD basis is increased, i.e., \[ \dfrac{\|\Aott\bV_\bv\|_F}{\|\bV_\bv\|_F},\] as function of the reduced order $r_\bv$. As expected, the POD basis does not satisfy the algebraic constraints if a larger order for the reduced model is chosen. Consequently, for high orders of the reduced models, the POD bases are not entirely divergence-free, leading to surrogate models that have a non-neglectable influence of the pressure. 
Ideally, these POD bases should be divergence-free, but in standard
implementations in Python and MATLAB, singular vectors corresponding to smaller
singular values are computed with low accuracy; cp. Remark \ref{rem:POD_algcond}. 

\begin{figure}[tb]
	\centering
	\setlength\figureheight{6.5cm}
	\setlength\figurewidth{10cm}
	\includegraphics[width =\figurewidth, height =\figureheight]{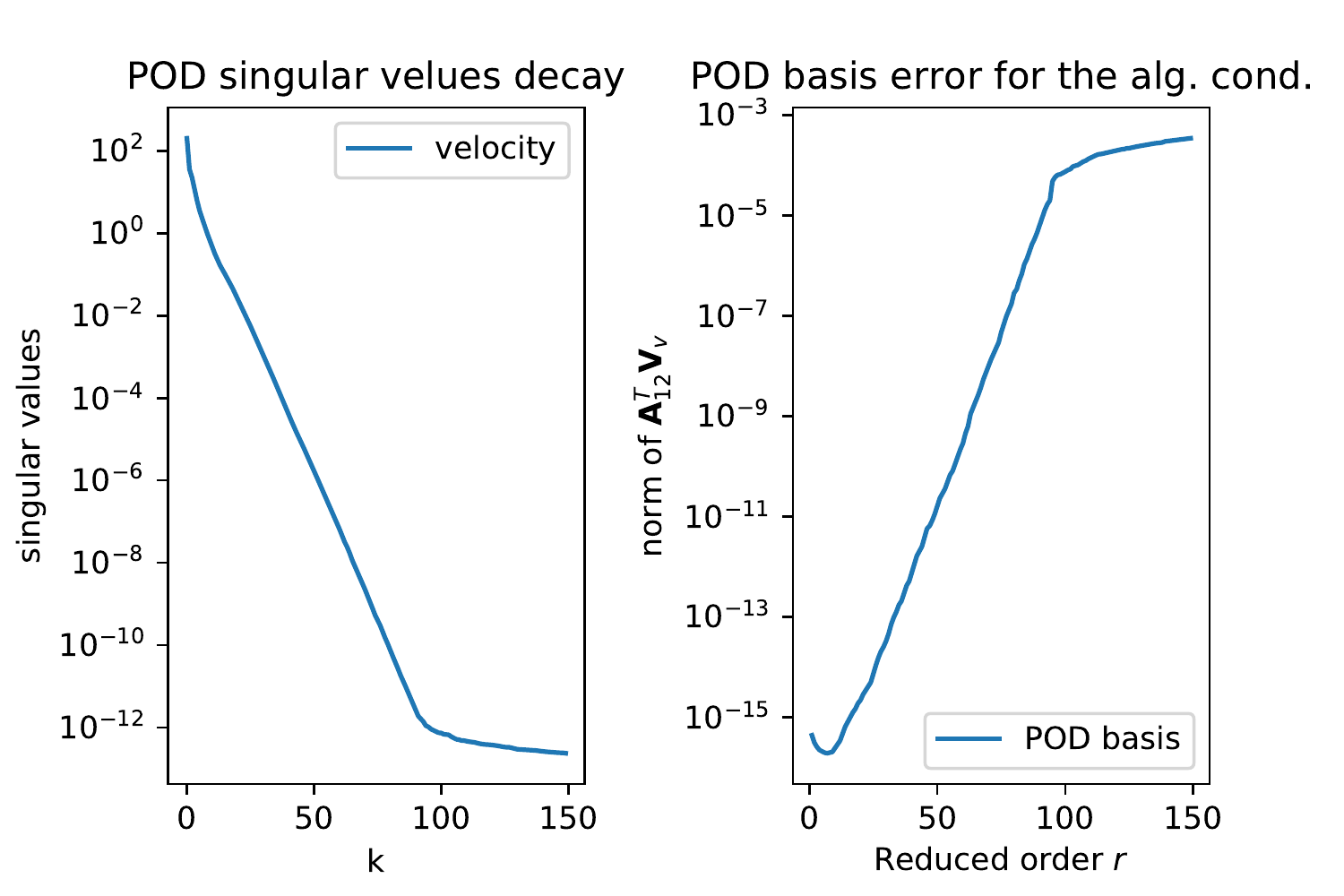}
	\caption{\DC: Decay of singular values and POD basis error for algebraic conditions.}
	\label{fig:dc_decay_sv}
\end{figure}

We infer reduced-order models of order $r_\bv = 30$ using \opinf, \opinflin, \POD, \DMD,~and \DMDc.  
As mentioned in Remark \ref{rem:l-curve}, the least-squares problem arising in \opinf~can be ill-conditioned; thus, the choice of $\texttt{tol}$ in step 5 of Algorithm~\ref{algo:opinf} plays a crucial role. Hence, for different tolerances in the range of $[10^{-11}, 10^{-6}]$, Figure \ref{fig:dc_l_curve} shows the L-curve, i.e., for each given $\texttt{tol}$, a scatter plot between the least-squares error and the norm of the least-squares solution. Using the L-curve criterion, we set the tolerance as $\texttt{tol} = 10^{-7}$. \Cref{fig:dc_time_domain_analysis} depicts the time-domain simulation of some observed trajectories for order $r_\bv = 30$.  It also shows the error $\|\bv_{\FOM}(t) - \bhv_{\ROM}(t)\|_F^2$ committed by the different surrogate models. Surprisingly, although the \FOM~had an intrinsic nonlinear behavior,  \opinf, \opinflin~and \DMDc~have a similar performance on this model. These three methodologies outperform \DMD~and \POD.  Additionally, by solving the least-squares problem \eqref{eq:opinf_optimization_pre}, one obtains the \opinf~reduced model for the pressure. The time-domain evolution of the pressure for the \FOM~and \opinf~models are depicted in Figure \ref{fig:dc_pressure} as well as the approximation error.   

\begin{figure}[tb]
	\centering
	\setlength\figureheight{6.5cm}
	\setlength\figurewidth{10cm}
	\includegraphics[width =\figurewidth, height =\figureheight]{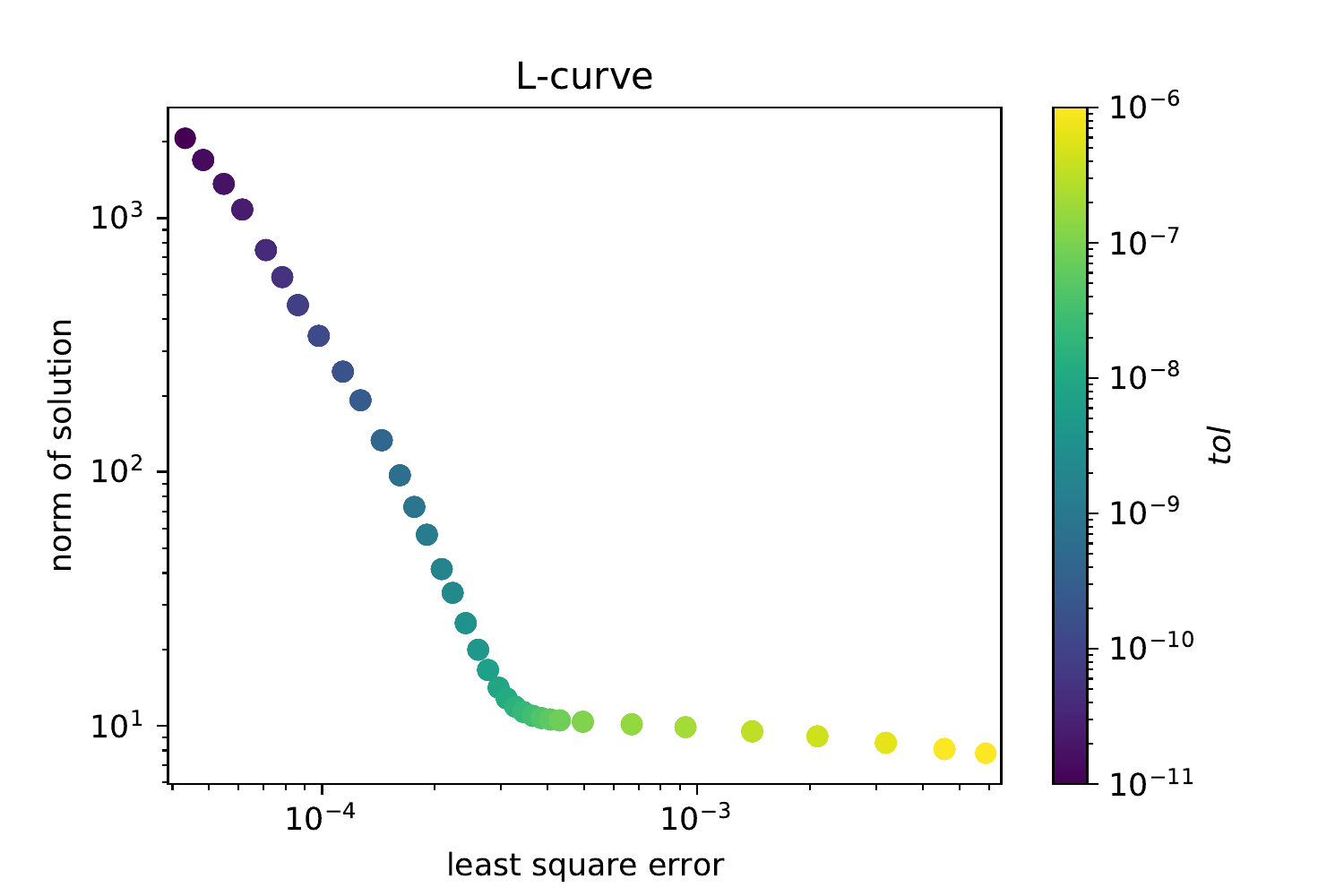}
	\caption{\DC: L-curve for $r_\bv = 30$}
	\label{fig:dc_l_curve}
\end{figure}

\begin{figure}[tb]
	\centering
	\setlength\figureheight{8cm}
	\setlength\figurewidth{12cm}
	\includegraphics[width =\figurewidth, height =\figureheight]{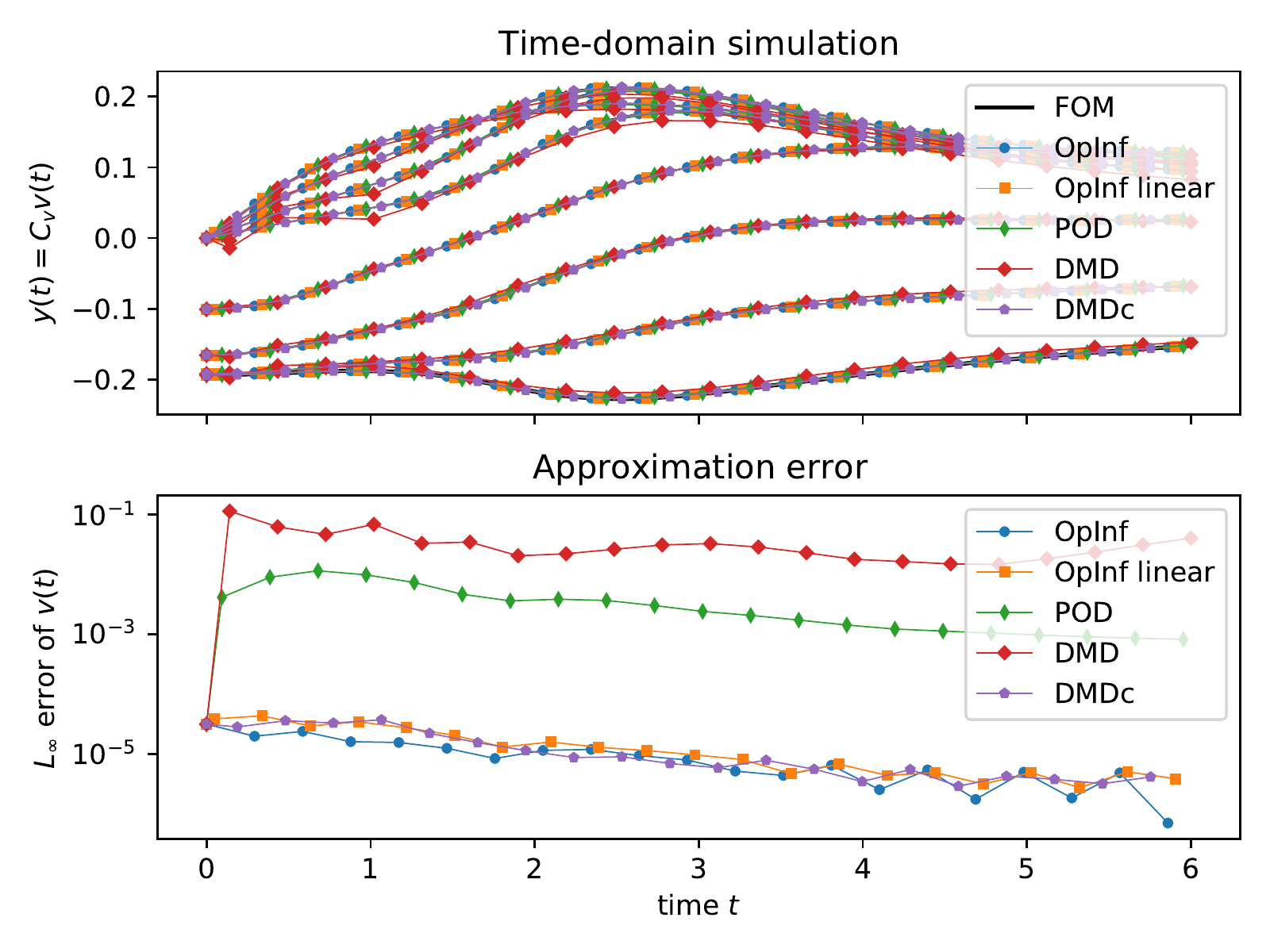}
	\caption{\DC: A comparison of time-domain simulations of the original and reduced-order models for $r_\bv= 30$.}
	\label{fig:dc_time_domain_analysis}
\end{figure}

\begin{figure}[tb]
	\centering
	\setlength\figureheight{8cm}
	\setlength\figurewidth{12cm}
	\includegraphics[width =\figurewidth, height =\figureheight]{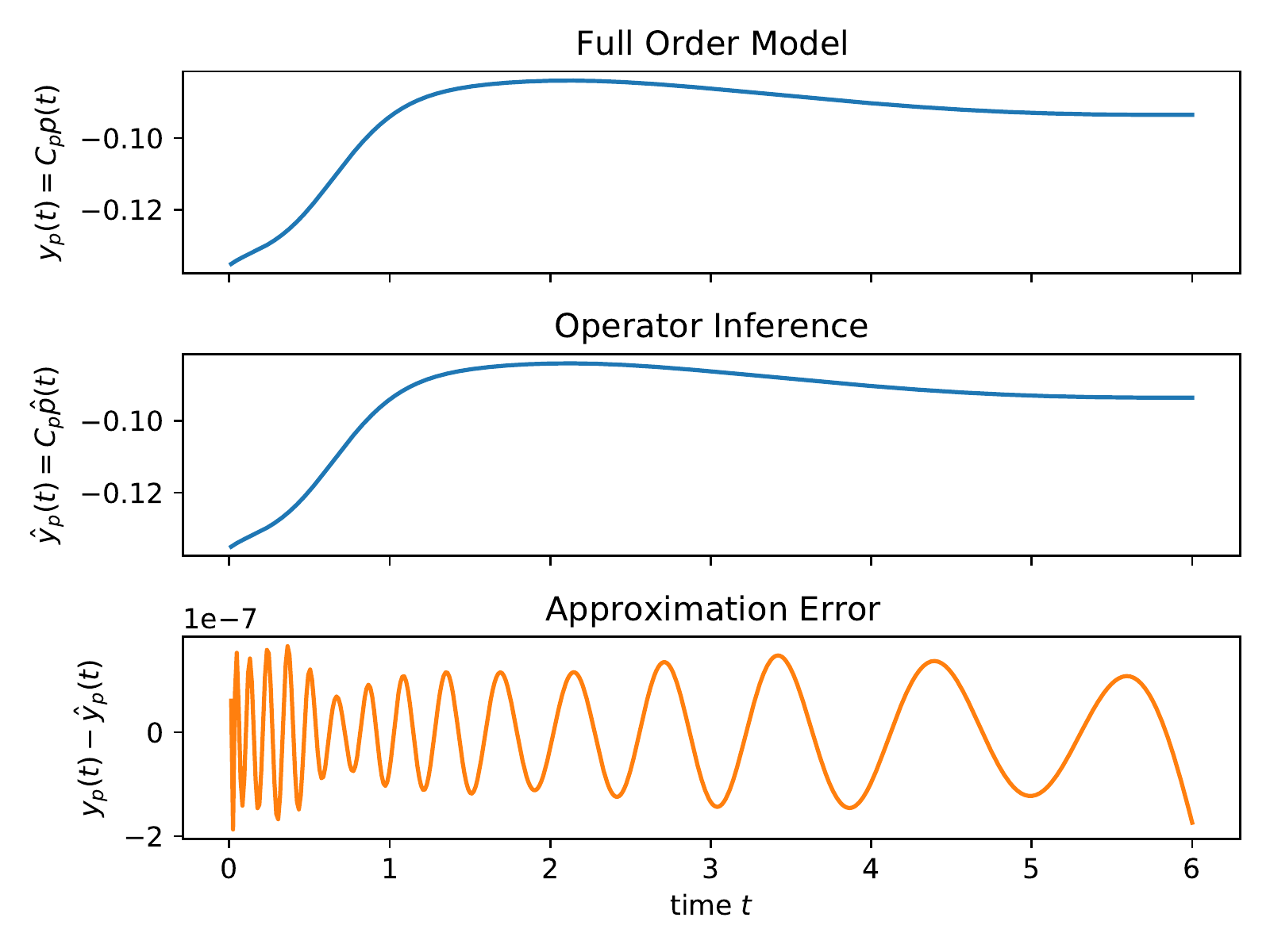}
	\caption{\DC: Time-domain evolution of the observed pressure (top), the values
  for the inferred model for order $r_\bv= 30$ and the difference in the
outputs.}
	\label{fig:dc_pressure}
\end{figure}

Next, we compute the relative time-domain $L_2$ errors for the order of the reduced-order models, ranging from  $6$ to $30$. Figure  \ref{fig:dc_error_decay_vs_order} depicts the decay of those errors for the different methodologies. For this example, the \POD~and \DMD~methodologies reach its performance limitations after some given reduced order. Once again,   \opinf, \opinflin,~and \DMDc~have a similar performance for the different reduced orders. This shows that, for this example, the presence of a control matrix $\bhB$  in the surrogate model structure is crucial for learning the dynamics of the \FOM, and the quadratic structure  $\bhH$ seems not to play an important role.

\begin{figure}[tb]
	\centering
	\setlength\figureheight{6.5cm}
	\setlength\figurewidth{10cm}
	\includegraphics[width =\figurewidth, height =\figureheight]{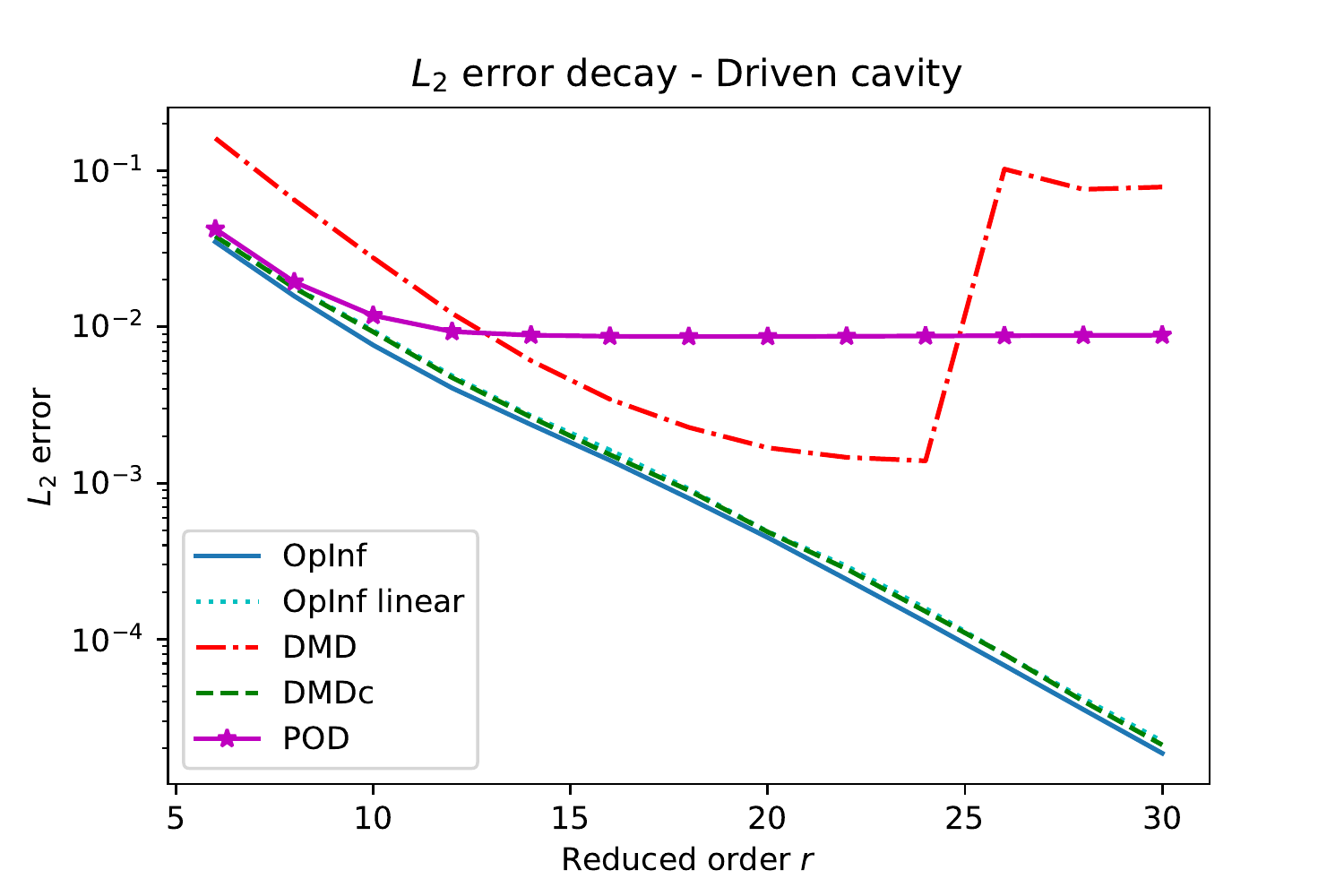}
	\caption{\DC: $L_2$ errors between the velocities obtained using the original and other reduced-order models,  obtained  using various method and orders.}
	\label{fig:dc_error_decay_vs_order}
\end{figure}

\subsubsection{Example 2: cylinder wake}
Now, we report the numerical experiments conducted for the \CW~example. We have considered the semi-discretized in space \FOM~of order $n_\bv = 5812$.  For this example, we again compute $513$ velocity snapshots in the interval $[0,2]$ equally spaced in time which are generated using the \emph{implicit-explicit} Euler scheme. Note that $\ft(t)\equiv \texttt{const}$ and, hence, the snapshot matrix $\bV$ does not satisfy the algebraic constraints. However, as explained in Section \ref{sec:ftwo-not-zero}, we can construct the divergence-free snapshot matrix 
\[\bV_0 = \Pi \bV = 
\begin{bmatrix}
	\vz(t_0), \vz(t_1), \dots, \vz(t_N)
\end{bmatrix}
\] by means of the Leray projection, see \eqref{eqn:leray_decomp}, where
$\vz(t)$ is the divergence-free velocity and, hence, $\Aott \bV_0 = 0$ holds.  The POD surrogate model will then be computed using the velocity free-snapshots as stated in Section \ref{sec:ftwo-not-zero}.
On the other hand, data-driven methods such as \opinf~and \DMD~to infer reduced models do not require such a transformation, and the original velocity snapshots can directly be used to learn models. 
This is due to the fact that there is a hidden quadratic ODE for the velocity $\bv(t)$. Next, two POD bases are computed using the snapshot matrix $\bV$ and another for the divergence-free velocity $\bV_0$.  Figure \ref{fig:cw_pod_decay_and_alg_cond}  depicts the decay of the singular values of these snapshot matrices as well as the algebraic error committed by the divergence-free POD basis. 
\begin{figure}[tb]
	\centering
	\setlength\figureheight{6.5cm}
	\setlength\figurewidth{10cm}
	\includegraphics[width =\figurewidth, height =\figureheight]{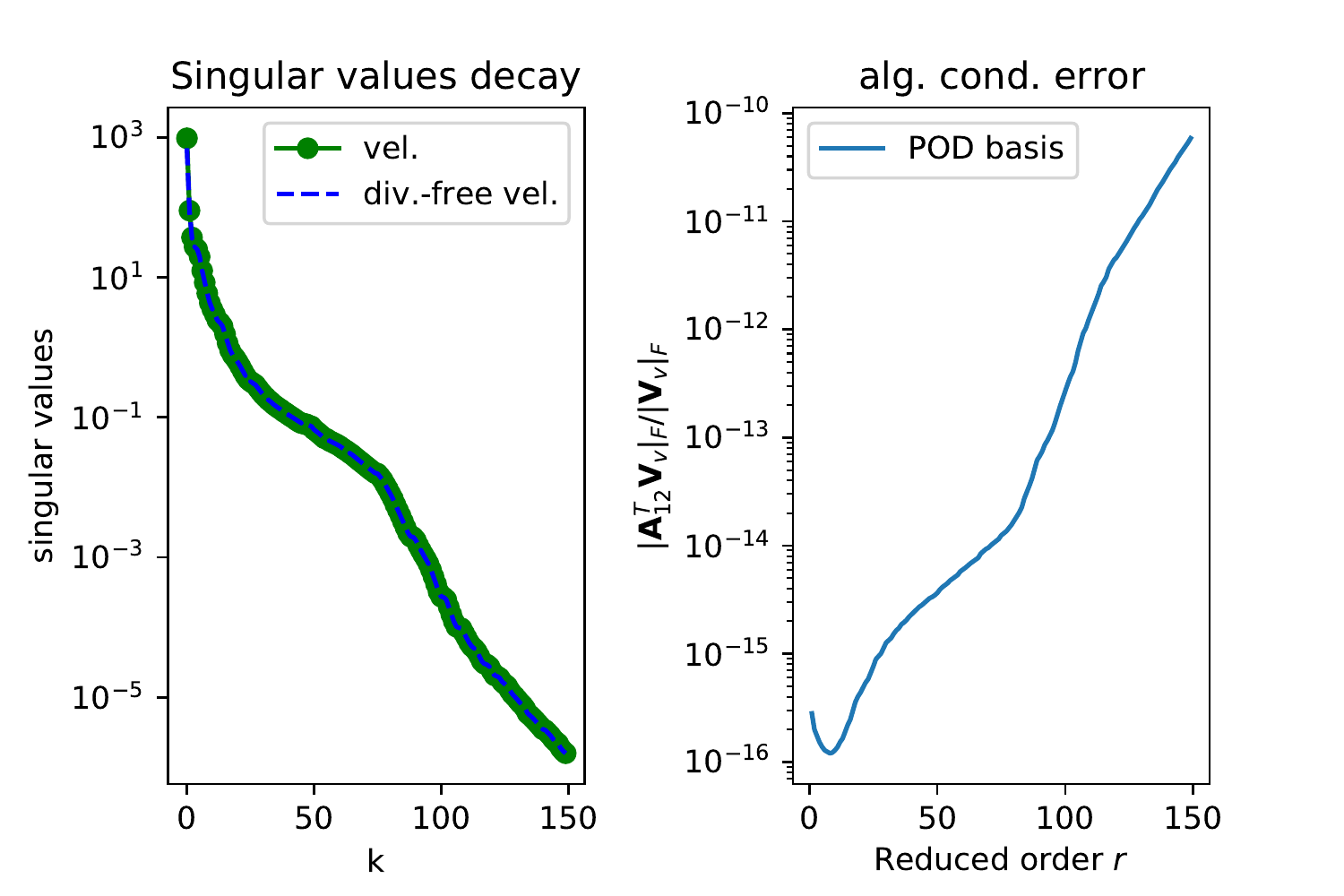}
	\caption{\CW: We plot the decay of singular values (left) and show how the divergence-free POD basis satisfies the  algebraic constraints (right).}
	\label{fig:cw_pod_decay_and_alg_cond}
\end{figure}

Next, for $r_\bv = 30$, surrogate models are obtained using the different methodologies. To compare the qualities of these models, we perform time-domain simulations and compare them in \Cref{fig:cw_time_domain_analysis}. We did not include the time-domain simulation for \DMD, since the results could not follow the \FOM~behavior at all.  Additionally, Figure \ref{fig:cw_time_pressure} shows the pressure evolution obtained using \opinf. Moreover,  we compute the relative time-domain $L_2$ errors for different orders between  $8$ and  $32$,  and the results are depicted in Figure  \ref{fig:dc_error_decay_vs_order}. As in the \DC~example, \opinf, \opinflin~and \DMDc~have a similar performance for the different reduced orders. Hence, for this example,  the control term $\bhB$ is crucial for the identification of \FOM.

\begin{figure}[tb]
	\centering
	\setlength\figureheight{8cm}
	\setlength\figurewidth{12cm}
	\includegraphics[width =\figurewidth, height =\figureheight]{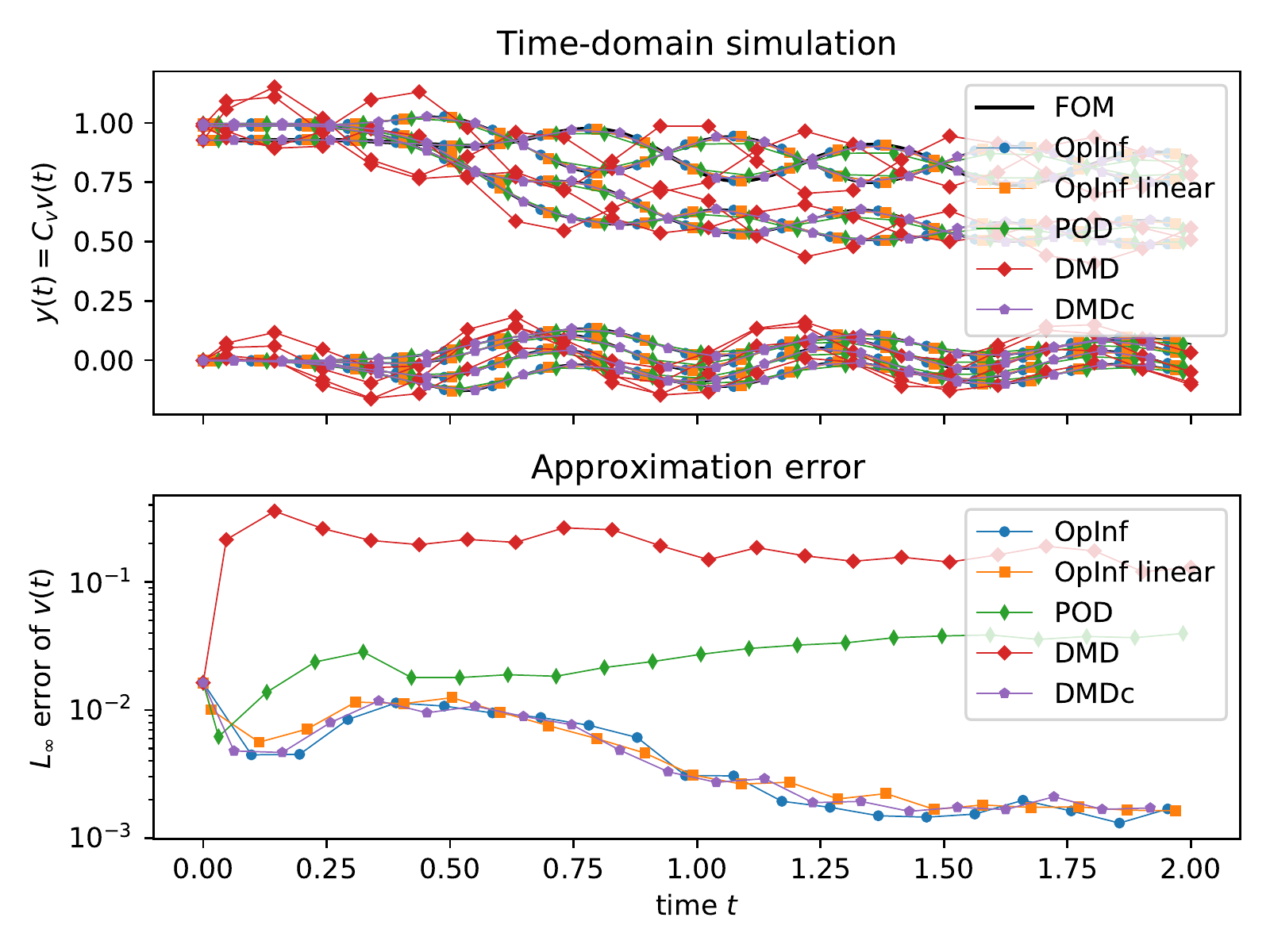}
	\caption{\CW: A comparison of time-domain simulation of different models.}
	\label{fig:cw_time_domain_analysis}
\end{figure}
\begin{figure}[tb]
	\centering
	\setlength\figureheight{8cm}
	\setlength\figurewidth{12cm}
	\includegraphics[width =\figurewidth, height =\figureheight]{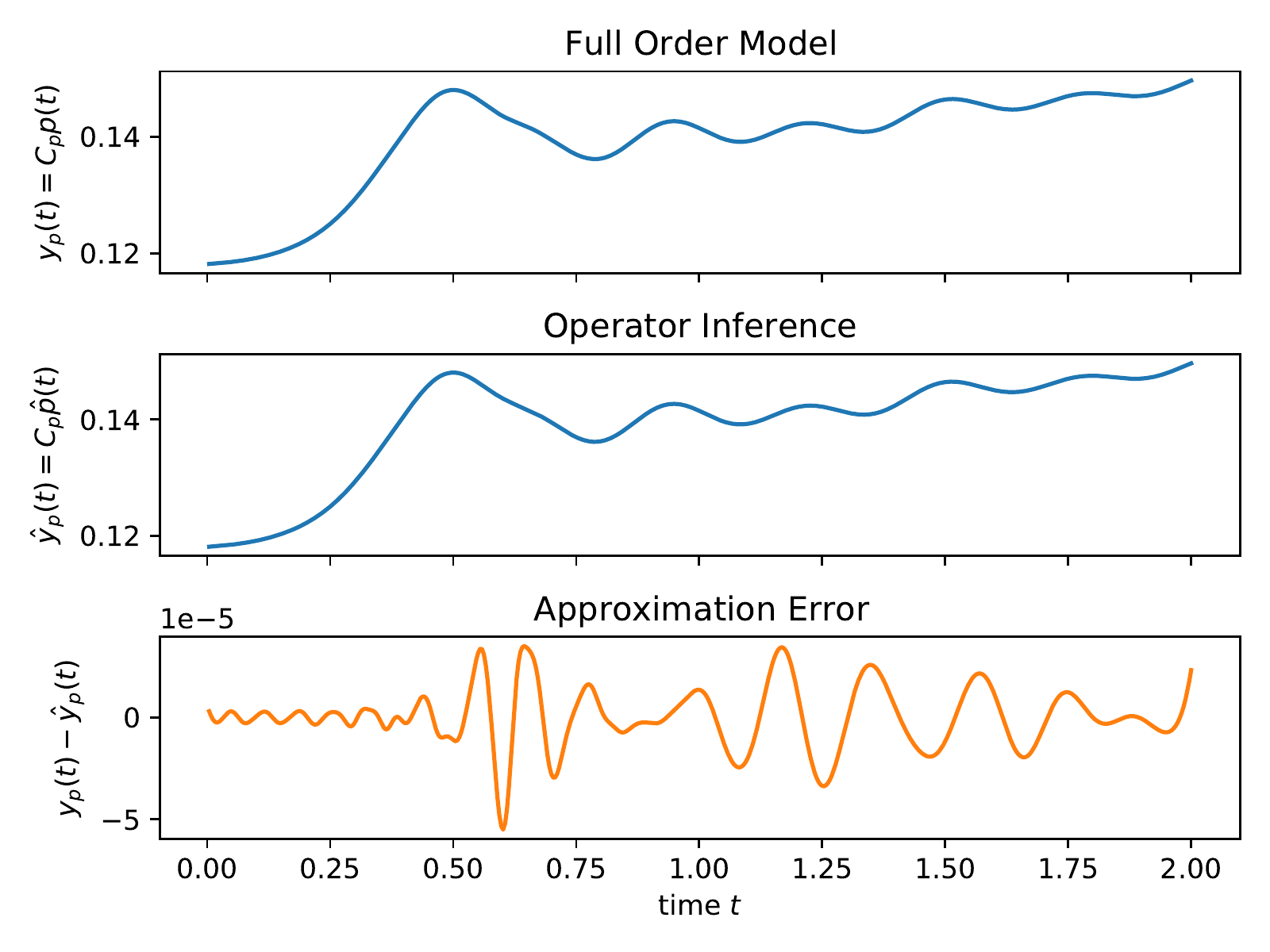}
	\caption{\CW: Comparison of the pressure output for the original model and the inferred model using \opinf.}
	\label{fig:cw_time_pressure}
\end{figure}

\begin{figure}[tb]
	\centering
	\setlength\figureheight{6.5cm}
	\setlength\figurewidth{10cm}
	\includegraphics[width =\figurewidth, height =\figureheight]{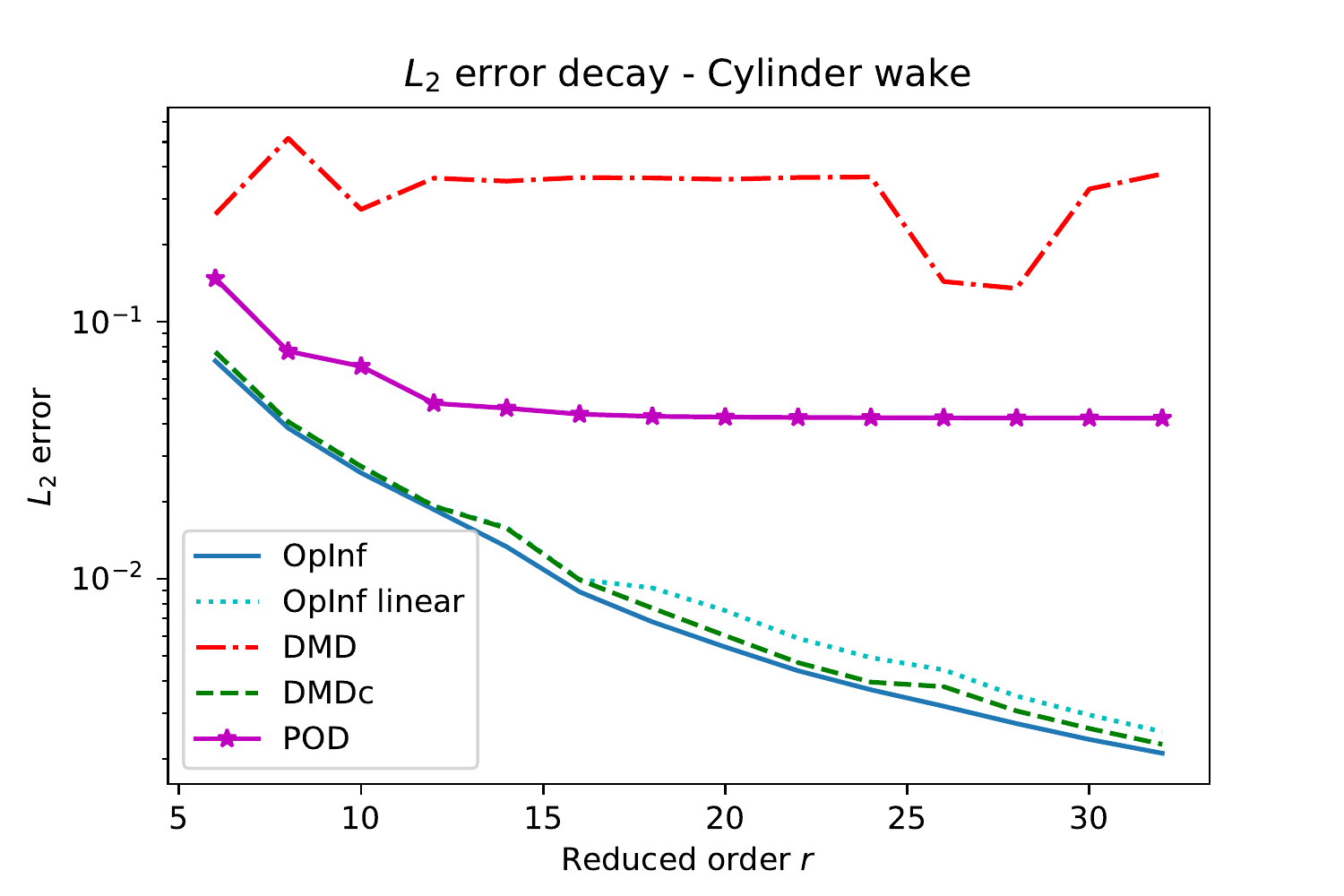}
	\caption{\CW: $L_2$ errors between the velocities obtained using the original and other reduced-order models,  obtained  using various method and orders.}
	\label{fig:cw_error_decay_vs_order}
\end{figure}

\begin{figure}
	\begin{minipage}{.7\textwidth}
		\centering
		\includegraphics[width=\textwidth]{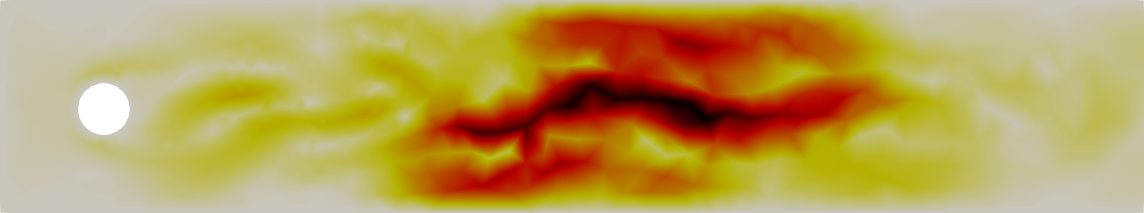}
		\includegraphics[width=\textwidth]{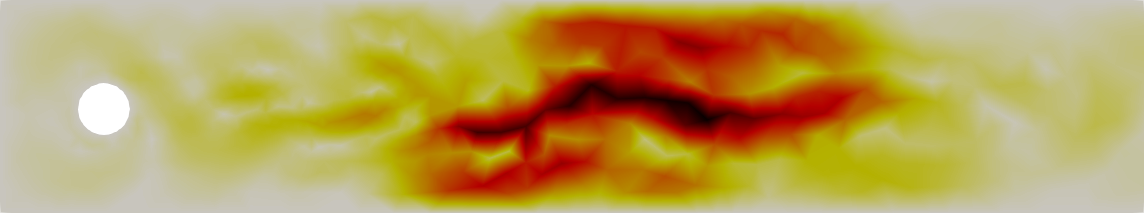}
	\end{minipage}%
	\begin{minipage}{.3\textwidth}
		\centering
    \includegraphics[width=.7\textwidth]{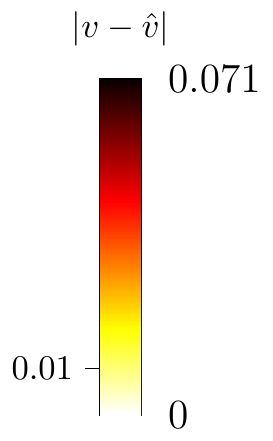}
	\end{minipage}
	\caption{Errors $|\bv(t) - \hat \bv(t)|$ for $r_\bv=30$ at $t=2$ for the approximation
		via \DMDc~(top) and \opinf~(bottom).}
\end{figure}

\begin{figure}[h]
	\begin{framed}
		\textbf{Code and Data Availability} \\
		The source code of the implementations, the raw data, and the scripts
		for computing the presented results are available via 
\begin{center}
  \href{https://doi.org/10.5281/zenodo.4086018}{\texttt{doi:10.5281/zenodo.4086018}}
\end{center}
		and GitHub under the MIT license.
	\end{framed}
		\caption{Link to code and data.}\label{fig:linkcodndat}
\end{figure}

\section{Conclusions}
In this paper, we have  tailored the operator inference approach \cite{morPehW16} to
learn physics-informed low-order models for incompressible flows using data.
We have shown that although the velocity and pressure are coupled in incompressible flow problems, one can identify a differential equation using only velocity snapshots that determine the velocity field's evolution. Moreover, there exists an algebraic equation linking pressure and velocity. In Koopman's philosophy, to learn dynamical systems, one can think of the velocity being the right observers to determine the evolution of pressure. 
Therefore,  learning of differential and algebraic equations can be easily separated, and we can identify the underlying differential and algebraic equations using only data. 
%
In contrast to that, the projection-based approaches such as
POD need to respect the differential-algebraic structure of the Navier-Stokes equations, in
particular if there is inhomogeneity in the incompressibility constraint or
if the basis vectors are not divergence-free.
%
%
Furthermore, an affine-linear variant of DMD that is completely data-based
too, showed very similar performance with operator inference. However, as illustrated in a small example, we expect to see a limit of DMD when the nonlinear term plays a significant role in defining the dynamics. 
%

\section*{Acknowledgments}

This work was supported by the German Research Foundation (DFG) priority program
1897: "Calm, Smooth and Smart - Novel Approaches for Influencing Vibrations by
Means of Deliberately Introduced Dissipation" 
and the German Research Foundation
(DFG) research training group 2297 "MathCoRe", Magdeburg.
%



\typeout{get arXiv to do 4 passes: Label(s) may have changed. Rerun}
\end{document}